\newcommand{\adp}{\psi_p}
\newcommand{\ad}{\psi^p}
\newcommand{\Th}{\theta^p}
\newcommand{\Z}{\mathbb{Z}}
\newcommand{\Zu}{\mathbb{Z}[1/p]}
\DeclareMathOperator{\Aut}{Aut}
\DeclareMathOperator{\Hom}{Hom}
\newcommand{\Lc}{\mathcal{L}}
\newcommand{\pp}{p}
\newcommand{\K}{K}
\newcommand{\id}{\mathrm{id}}
\newcommand{\pt}{\mathsf{point}}
\DeclareMathOperator{\Fi}{F}
\newcommand{\Fig}[2]{\Fi_{\gamma}^{#1} \K^0({#2})}
\newcommand{\Fit}[2]{\Kzl({#2})_{({#1})}}
\newcommand{\Grg}[2]{\gr_{\gamma}^{#1} \K^0({#2})}
\newcommand{\Grgs}{\gr_{\gamma} \K^0}
\newcommand{\Grgd}[1]{\gr_{\gamma}^{#1} \K^0}
\newcommand{\wg}[2]{w_{#1}^{\gamma,\pp}({#2})}
\newcommand{\wgs}{w^{\gamma,\pp}}
\newcommand{\wchs}{w^{\CH,\pp}}
\newcommand{\wch}[2]{ w_{#1}^{\CH,\pp}({#2})}
\DeclareMathOperator{\rank}{rank}
\DeclareMathOperator{\gr}{gr}
\DeclareMathOperator{\Spec}{Spec}
\DeclareMathOperator{\im}{im}
\newcommand{\Q}{\mathbb{Q}}
\newcommand{\ba}[1]{\overline{#1}}
\newcommand{\Kzh}{\K^0}
\newcommand{\Kzl}{\K_0}
\newcommand{\Kit}[2]{\Kzl({#2})_{({#1})}}
\newcommand{\Grt}[2]{\gr_{{#1}}\Kzl({#2})}
\newcommand{\Grts}{\gr_{\bullet}\Kzl}
\DeclareMathOperator{\CH}{CH}
\DeclareMathOperator{\Ch}{Ch}
\newcommand{\Chb}{\ba{\Ch}}
\newcommand{\Fc}{\mathcal{F}}
\newcommand{\Ec}{\mathcal{E}}
\newcommand{\Oc}{\mathcal{O}}
\newcommand{\Tan}{T}
\DeclareMathOperator{\Steenrod}{S}
\newcommand{\Sq}{\ba{\Steenrod}}
\newtheorem{theorem}{Theorem}[section]
\newtheorem{proposition}[theorem]{Proposition}
\newtheorem{lemma}[theorem]{Lemma}
\theoremstyle{definition}
\newtheorem{remark}[theorem]{Remark}
\newtheorem{example}[theorem]{Example}
\newtheorem{definition}[theorem]{Definition}
\newtheorem{condition}[theorem]{Condition}
\begin{document}
\begin{abstract}
We give a new construction of a weak form of Steenrod operations for Chow groups modulo a prime number $p$ for a certain class of varieties. This class contains projective homogeneous varieties which are either split or over a field admitting some form of resolution of singularities, for example any field of characteristic not $p$. These reduced Steenrod operations are sufficient for some applications to the theory of quadratic forms. 
\end{abstract}
\author{Olivier Haution}
\title{Reduced Steenrod operations and resolution of singularities}
\email{olivier.haution@gmail.com}
\address{School of Mathematical Sciences\\
University of Nottingham\\
University Park\\
Nottingham\\
NG7 2RD\\ 
United Kingdom}
\subjclass[2000]{14C40}
\keywords{Steenrod operations, Riemann-Roch theorem, Chow groups}
\date{\today}
\maketitle

\tableofcontents
\section*{Introduction}
The Steenrod operations have first been introduced in algebraic topology, in the late forties. These are operations on the modulo $p$ cohomology of topological spaces. Steenrod, Serre, Borel and others used them to prove results in algebraic topology, especially obstructions to the existence of bundles and maps. These operations have been later used in a different way, in the context of the Adams spectral sequence or of the Sullivan conjecture.

Fifty years after Steenrod original construction, the operations appeared in algebraic geometry, and were quickly used to study projective homogeneous varieties, or by Voevodsky to prove the Milnor (and Bloch-Kato) conjecture.

Unfortunately, there is no construction of Steenrod operations modulo $p$ which works over a field of characteristic $p$. As a result, many important questions regarding projective homogeneous varieties remain open for certain values of the characteristic of the base field. For example, some of deepest theorems on quadratic forms are unknown when the base field has characteristic two.\\

Constructions of Steenrod operations for Chow groups modulo a prime number $p$ are described in the papers \cite{Boi-A-08, Bro-St-03, EKM, Vo-03}. They all involve considering the action of the cyclic group of order $p$ on the product of $p$ copies of a given scheme, as did the original construction of Steenrod. This kind of approach fails over a field of characteristic $p$.

Over a field of characteristic zero, Steenrod operations may be constructed using techniques related to algebraic cobordism, as in \cite{Lev-St-05}. As explained in \cite[Remark~1.2.20]{LM-Al-07} or \cite[\S~2.7]{LP-Alg-09}, both resolution of singularities and the weak factorization theorem of \cite{Weak-factorization} are needed to compute the cobordism ring of a point, and to prove the expected universality property of the Chow group among theories whose formal group law is additive. This accounts for the restriction to base fields of characteristic zero.\\ 

When $X$ is a projective homogeneous variety over a field $F$ (under a semi-simple algebraic group), and $\ba{F}/F$ a field extension splitting $X$, the subgroup of the Chow group modulo $p$ of $X \times_{\Spec(F)} \Spec(\ba{F})$ which consists of the cycles defined over $F$ is the \emph{reduced Chow group} of $X$. A weak version of Steenrod operations, when they exist, are the \emph{reduced Steenrod operations}, that is, the operations induced at the level of reduced Chow groups.

Steenrod operations have been used as mean of producing new algebraic cycles, especially on the square of a given projective homogeneous variety, thus providing a motivic decomposition of this variety. Thanks to the Rost nilpotence theorem, the same result can be achieved using only reduced Steenrod operations. For example, it is explained in \cite{firstsq} that reduced Steenrod operations are sufficient to prove statements such as Hoffmann's Conjecture on the possible values of the first Witt index of quadratic forms. However some subtler results do not seem to follow from the existence of reduced Steenrod operations, for example the classical degree formula and its consequences, or \cite[Theorem 9.1]{Ros-On-06} about Rost correspondences.\\

One of the purposes of this paper is to show that, in order to construct reduced Steenrod operations, one does not need the weak factorization theorem, nor any explicit restriction on the characteristic of the base field. This contrasts with all known constructions of these operations. We still need some kind of desingularization procedure (a pro-$p$ version of resolution of singularities suffices).

Given a split projective homogeneous variety over an arbitrary field, we build homological Steenrod operations on the Chow group (modulo a prime integer $\pp$) of the variety, satisfying the expected functoriality properties. Of course, one wants rather to study non-split projective homogeneous varieties, and one cannot expect to derive deep results from this construction. However this reduces the question of constructing reduced Steenrod operations to the question of showing that a certain well-defined operation on the Chow group of a projective homogeneous variety, with scalars extended to a splitting field, preserves rationality of cycles. This suggests that constructing the reduced form of Steenrod operations might be easier in general.

If the base field admits $\pp$-resolution of singularities (Definition~\ref{def:p-res}), we show that the homological Steenrod operations that we have constructed over a splitting field send rational cycles to rational cycles. Therefore we get the reduced Steenrod operations.\\

In 1966, Atiyah published a paper, \cite{Ati-Po-66}, where Adams operations are reconstructed using equivariant $K$-theory, following a pattern very similar to Steenrod construction of its operations. When a finite CW-complex has no torsion in its singular cohomology, then the latter coincides with the graded group associated with the skeletal filtration in $K$-theory. Atiyah explains how to recover Steenrod operations in cohomology from Adams operations in $K$-theory, under this (very restrictive) assumption of absence of torsion. 

The basic idea of the present paper is to translate some of Atiyah's constructions for topological $K$-theory of finite CW-complexes to the setting of algebraic geometry. Adams operations are defined on the (cohomological) $K$-theory of vector bundles, and the topological filtration on the (homological) $K$-theory of coherent sheaves. But the two theories coincide on regular schemes, and interesting phenomena can be expected to happen from the interaction of these two notions. A precise statement is given in Theorem~\ref{ad_main}.\\

We believe that Section~\ref{Sect:charac} can provide good preview of the techniques used here. Some divisibilities of characteristic numbers (for example the fact that the Segre number of a smooth projective variety is divisible by two) are usually established using Steenrod operations. We show in this section how these divisibilities follow from the Adams-Riemann-Roch theorem, therefore proving them over fields of arbitrary characteristic.\\

In the last section, we explain how the results obtained here can be used to prove a degree formula. This generalizes a formula of \cite{Zai-09}, when the base field admits $p$-resolution of singularities. It is quite surprising that we can recover this formula, since \cite{Zai-09} uses in an essential way the \emph{non-reduced} Steenrod operations, via Rost's degree formula. \\

\paragraph{\bf Acknowledgements} This work is part of my Ph.D.~thesis, defended in December 2009 at the university of Paris 6, under the direction of Nikita Karpenko. I would like to thank him for his guidance during this work.

The idea, crucial in this work, of transposing Atiyah's results as explained above, has been suggested by Fabien Morel.

I am grateful to the referee for his comments, especially those concerning the historical background of the subject of this paper, which have been incorporated in the introduction.

\section{Notations}
The letter $p$ will denote a prime integer, unless otherwise specified. When $a$ is a rational number, the integer $[a]$ is defined as the greatest integer $\leq a$.

If $F$ is a functor from a category $\mathsf{C}$ to the category of commutative rings, we denote by $F^\times$ the induced functor with values in the category of abelian groups, where for an object $C \in \mathsf{C}$, the group $F^\times(C)$ is the multiplicative group of the ring $F(C)$.\\

\paragraph{\bf Varieties} A \emph{variety} is a separated quasi-projective scheme of finite type over a field, which need not be irreducible nor reduced. All schemes considered here will be assumed to be varieties. A \emph{regular variety} is a variety whose local rings are all regular local rings.

A \emph{local complete intersection morphism} $f \colon X \to Y$ is a morphism which can be factored as a regular closed embedding $c$, followed by a smooth morphism $s$. If $c$ has normal bundle $N_c$, and $s$ has tangent bundle $\Tan_s$, then the \emph{virtual tangent bundle} of $f$ is defined as $\Tan_f=c^*[\Tan_s] - [N_c] \in \Kzh(X)$.

By $\pt$ we shall denote the spectrum of the base field. We say that a variety $X$ is a \emph{local complete intersection} when its structural morphism $x \colon X \to \pt$ is a local complete intersection morphism. Its virtual tangent bundle $\Tan_X$ is the virtual tangent bundle of $x$.\\

\paragraph{\bf Grothendieck group of coherent sheaves} Let $X$ be a variety. We shall write $\Kzl(X)$ for the Grothendieck group of coherent $\Oc_X$-sheaves. A sheaf $\Fc$ has a class $[\Fc] \in \Kzl(X)$. Projective morphisms induce push-forwards, and flat morphisms or regular closed embeddings  induce pull-backs.\\

\paragraph{\bf Topological filtration}
The group $\Kzl(X)$ is endowed with a topological filtration $0=\Kit{-1}{X} \subset \Kit{0}{X} \subset \cdots \subset \Kit{\dim X}{X} = \Kzl(X)$. For every integer $d$, $\Kit{d}{X}$ is the subgroup of $\Kzl(X)$ generated by classes of coherent $\Oc_X$-sheaves supported in dimension $\leq d$. This filtration is compatible with projective push-forwards and pull-backs along flat morphisms or regular closed embeddings (\cite[Theorem~83]{gil-K-05}). The associated graded group shall be denoted by $\Grt{\bullet}{X}$.\\

\paragraph{\bf Grothendieck ring of vector bundles}
For a variety $X$, we shall write $\Kzh(X)$ for its Grothendieck ring of vector bundles. A vector bundle $\Ec$ on $X$ has a class $[\Ec] \in \Kzh(X)$. An arbitrary morphism of varieties $f \colon X \to Y$ induces a pull-back homomorphism $f^* \colon \Kzh(Y) \to \Kzh(X)$.

The ring $\Kzh(X)$ acts on $\Kzl(X)$, and the map $\Kzh(X) \to \Kzl(X), x \mapsto x \cdot [\Oc_X]$ is an isomorphism when the variety $X$ is regular (\cite[Chapter~VI, Proposition~3.1]{FL-Ri-85}). We shall therefore sometimes identify the two Grothendieck groups for regular varieties.

When $X$ is connected, $\Kzh(X)$ is equipped with an augmentation 
\[
\rank \colon \Kzh(X) \to \Z.
\]\\

\paragraph{\bf Gamma filtration}
The group $\Kzh(X)$ is endowed with the gamma filtration $\cdots \subset \Fig{k+1}{X} \subset \Fig{k}{X} \subset \cdots \subset \Fig{0}{X}=\Kzh(X)$.  We have (\cite[Chapter~V, Corollary~3.10]{FL-Ri-85})
\begin{equation}
\label{eq:fig}
\Fig{\dim X +1}{X}=0.
\end{equation}
The gamma filtration is compatible with pull-backs, products, and we have
\begin{equation}
\label{eq:gammatop}
\Fig{k}{X} \cdot \Fit{n}{X}  \subset \Fit{n-k}{X}.
\end{equation}
We shall write $\Grg{\bullet}{X}$ for the associated graded ring. It acts on the graded group $\Grt{\bullet}{X}$.\\

\paragraph{\bf Chern classes}
The presheaves $\Kzh$ and $\Grgs$ are equipped with Chern classes (see \cite[\S~2]{Kar-Co-98} where the category of smooth varieties is considered; the same construction works for the category of arbitrary varieties). We shall use the following formula for the first Chern class of a line bundle $\Lc$ on a variety $X$ (the sheaf $\Lc^\vee$ being the dual of $\Lc$) : 
\[
c_1[\Lc]=c_1(\Lc)=1-[\Lc^\vee] \in \Kzh(X).
\]
This element belongs to $\Fig{1}{X}$, hence is nilpotent by \eqref{eq:fig}.

\section{Bott's Class}
Let $\Lc$ be a line bundle on a variety $X$. The element
\[
1+[\Lc]^{-1}+\cdots+[\Lc]^{1-p}=p-c_1(\Lc^{\otimes 1})-\cdots-c_1(\Lc^{\otimes (p-1)})
\]
is invertible as an element of $\Zu \otimes \Kzh(X)$. Using the Chern classes and the splitting principle, we see that there is a unique morphism of presheaves of groups
\[
\Th \colon \Kzh \to (\Zu \otimes \Kzh)^{\times}
\]
satisfying, for any line bundle $\Lc$, the formula
\[
\Th[\Lc]=1+[\Lc]^{-1}+\cdots+[\Lc]^{1-p}=\frac{1-\big(1-c_1(\Lc)\big)^p}{c_1(\Lc)}.
\]
In the terminology of \cite{Pan-Ri-02}, $\Th$ is the \emph{inverse Todd genus} of the $p$-th Adams operation.

We also consider the unique morphism of presheaves of groups
\[
\wgs \colon \Kzh \to (\Grgs)^{\times}
\]
satisfying, for any line bundle $\Lc$, the formula
\begin{equation}
  \label{eq:wg}
  \wgs[\Lc]=\big(-c_1(\Lc)\big)^{p-1}+1.
\end{equation}
Note that this is --- up to a sign --- the total Chern class homomorphism when $p=2$. The grading on $\Grgs$ allows us to define individual components 
\[
\wgs_k \colon \Kzh \to \Grgd{k(p-1)}.
\]

Finally, using the Chern classes $c_i$ with values in Chow groups, we can consider the unique morphism of presheaves of groups
\[
\wchs \colon \Kzh \to \Aut(\CH)
\]
satisfying $\wchs[\Lc]=\big(-c_1(\Lc)\big)^{p-1} + \id$, and individual components $\wchs_k$ with values in $\Hom(\CH_{\bullet},\CH_{\bullet-k(p-1)})$.\\ 

If $x \in \Kzh(X)$, we shall sometimes denote the same way the endomorphism $\wchs(x)$ of $\CH(X)$ and the element $\wchs(x)[X] \in \CH(X)$, and similarly for Chern classes.

\begin{proposition}
\label{prop:Bott}
Let $X$ be a connected variety, and $e \in \Kzh(X)$. Then there exists elements $e_k \in \Fig{k(\pp-1)}{X}$  satisfying in $\Zu \otimes \Kzh(X)$
\[
\Th(e)=\sum_{k=0}^{\left[\frac{\dim X}{p-1}\right]} \pp^{\rank(e) -k} \otimes e_k.
\]
Moreover, the element $e_k$ can be chosen so that we have in $\Grg{k(\pp-1)}{X}$,
\[
e_k=\wg{k}{e} \mod \Fig{k(\pp-1)+1}{X}.
\]
\end{proposition}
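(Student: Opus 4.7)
The plan is to reduce to the case of a line bundle via the splitting principle, where the claim follows from a direct binomial expansion. Expanding $\Th[\Lc]=\bigl(1-(1-c_1(\Lc))^p\bigr)/c_1(\Lc)$ and using the divisibility $p\mid\binom{p}{i}$ for $1\leq i\leq p-1$ yields $\Th[\Lc]=p\cdot u(\Lc)+(-c_1(\Lc))^{p-1}$, with $u(\Lc)\in\Kzh(X)$ congruent to $1$ modulo $\Fig{1}{X}$ and $(-c_1(\Lc))^{p-1}\in\Fig{p-1}{X}$. This settles the proposition for $e=[\Lc]$.

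For a vector bundle $\Ec$ of rank $r$, pick a flag morphism $\pi\colon Y\to X$ along which $\pi^*$ is injective on $\Kzh$ and $\pi^*[\Ec]=\sum_i[\Lc_i]$. Multiplicativity of $\Th$ gives
\[
\pi^*\Th[\Ec]=\prod_{i=1}^r\bigl(p\,u_i+v_i\bigr)=\sum_{k=0}^r p^{r-k}E_k^Y,\qquad E_k^Y=\sum_{|S|=k}\prod_{i\notin S}u_i\prod_{i\in S}v_i,
\]
where $u_i=u(\Lc_i)$ and $v_i=(-c_1(\Lc_i))^{p-1}$. Each $E_k^Y$ is symmetric in the $c_1(\Lc_i)$ and every monomial has total $c_1$-degree $\geq k(p-1)$, so writing it through elementary symmetric polynomials in the Chern roots (which on $Y$ equal $\pi^*c_j(\Ec)$) gives $E_k^Y=\pi^*E_k$ for a polynomial $E_k$ in the $c_j(\Ec)\in\Fig{j}{X}$ of weighted degree $\geq k(p-1)$; compatibility of products with the gamma filtration yields $E_k\in\Fig{k(p-1)}{X}$, and injectivity of $\pi^*$ descends the decomposition to $X$. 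For arbitrary $e=[\Ec]-[\Fc]$, the element $\Th[\Fc]$ is inverted in $\Zu\otimes\Kzh(X)$ by factoring it as $p^{\rank\Fc}F_0^\Fc\cdot\bigl(1+\text{nilpotent}\bigr)$, with $F_0^\Fc\in 1+\Fig{1}{X}$ invertible in $\Kzh(X)$ and the nilpotent part of the shape $\sum_{l\geq 1}p^{-l}(\text{element of }\Fig{l(p-1)}{X})$; the terminating geometric series preserves this shape, and multiplying by the expansion of $\Th[\Ec]$ yields the claimed expansion for $\Th(e)$. The truncation $k\leq[\dim X/(p-1)]$ is forced by \eqref{eq:fig}.

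For the ``moreover'' part, reducing $E_k^Y$ modulo $\Fig{k(p-1)+1}{X}$ kills every term where some $u_i$-factor contributes above its leading coefficient $1$, leaving $\sum_{|S|=k}\prod_{i\in S}v_i$, which by \eqref{eq:wg} and multiplicativity of $\wgs$ equals $\pi^*\wg{k}{[\Ec]}$; the same inversion step then gives $e_k\equiv\wg{k}{e}\pmod{\Fig{k(p-1)+1}{X}}$ for general $e$. The main obstacle is the descent back from $Y$ to $X$: one must match the symmetric polynomial $E_k^Y$ with a polynomial in Chern classes of $\Ec$ and verify that it lies in the correct piece of the gamma filtration, relying on the identification of elementary symmetric polynomials in the Chern roots with Chern classes on $X$.
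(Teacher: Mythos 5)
Your proposal is correct and follows the same route as the paper: the paper's (very terse) proof likewise establishes the line-bundle identity $\Th[\Lc]=\big(-c_1(\Lc)\big)^{p-1}+p\big(1+c_1(\Lc)T_p(c_1(\Lc))\big)$ using primality of $p$, compares it with \eqref{eq:wg}, and invokes the splitting principle together with \eqref{eq:fig}. You have simply written out the details the paper leaves implicit (the symmetric-function descent from the flag bundle, the inversion for virtual classes, and the identification of leading terms), all of which check out.
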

\begin{proof}
It is easy to see that there exists a polynomial $T_{\pp}$, with integral coefficients, such that we have, for any line bundle $\Lc$,\footnote{This is precisely the place where we need $\pp$ to be prime number, and not an arbitrary integer}
\[
\Th[\Lc]=\big(-c_1(\Lc)\big)^{\pp-1} + \pp \Big(1 + c_1(\Lc) \cdot T_{\pp}\big(c_1(\Lc)\big)\Big).
\]
The statement can then be deduced from the comparison of this formula with \eqref{eq:wg}, using the splitting principle (and \eqref{eq:fig}).
\end{proof}

\section{Topological filtration and Chow group}
For every variety $X$, there is a homomorphism of graded groups 
\[
\varphi_X \colon \CH_{\bullet}(X) \to \Grt{\bullet}{X},
\]
sending the class of a $k$-dimensional closed subvariety $Z$ of $X$ to the class of its structure sheaf $\Oc_Z$ modulo $\Fit{k-1}{X}$ (see \cite[Example~15.1.5]{Ful-In-98}). We record here some properties of this map that will be useful in the sequel.

\begin{proposition}
\label{prop:phi}
\begin{enumerate}[(i)]
\item \label{phiproper} If $f \colon X \to Y$ is proper, then $f_* \circ \varphi_X=\varphi_Y \circ f_*$.
\item \label{phiflat} If $f \colon X \to Y$ is a flat morphism, then $f^* \circ \varphi_Y=\varphi_X \circ f^*$.
\item \label{philci} If $f \colon X \to Y$ is a local complete intersection morphism, then $f^* \circ \varphi_Y=\varphi_X \circ f^*$.
\item \label{phiextprod} If $X$ and $Y$ are varieties over a common base field, $x\in \CH(X), y \in \CH(Y)$, we have $\varphi_{X\times Y}(x \boxtimes y)=\varphi_X(x) \times \varphi_Y(y)$.
\item \label{phiw} We have a commutative diagram
\[ \xymatrix{
  \Kzh \ar[r]^{\wchs} \ar[d]_{\wgs} & \CH \ar[d]^{\varphi} \\ 
\Grgs \ar[r] & \Grts
}\]
\item \label{phiisom} The map $\varphi_X$ is surjective, its kernel consists of torsion elements.
\end{enumerate}
\end{proposition}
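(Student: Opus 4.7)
The plan is to dispatch the six parts in order, reducing each to a direct check on classes $[\Oc_Z]$ for closed subvarieties $Z$, using d\'evissage in $\Kzl$.

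For (i) and (ii), I would compute on generators. Given a $k$-dimensional closed subvariety $Z \subset X$ and a proper morphism $f \colon X \to Y$, the pushforward $f_*[\Oc_Z]$ equals $\deg(Z/f(Z)) \cdot [\Oc_{f(Z)}]$ modulo $\Kit{k-1}{Y}$ when $\dim f(Z) = k$, and lies in $\Kit{k-1}{Y}$ otherwise; this matches $f_*[Z] \in \CH_k(Y)$ by definition of proper pushforward of cycles. For flat $f$, we have $f^*[\Oc_Z] = [\Oc_{f^{-1}Z}]$, and componentwise with appropriate multiplicities this matches the flat pullback of the cycle $[Z]$.

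Part (iii) is the most technical. I would factor $f$ as a regular closed embedding following a smooth morphism; the smooth case is flat and handled by (ii), and for a regular closed embedding one deforms to the normal cone to reduce to the zero section of the normal bundle, which is again flat. The compatibility of the refined Gysin maps at the graded level is recorded in Fulton's Example~15.1.5. The main obstacle is ensuring that the agreement holds modulo the lower piece of the topological filtration, rather than on the nose.

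For (iv), write $x \boxtimes y = p_X^* x \cdot p_Y^* y$ using the flat projections from $X \times Y$, apply (ii) twice, and use that the $\Kzh$-action on $\Kzl$ descends through $\varphi$ to the intersection product on $\CH$ (this last point reduces, on generators in general position, to Tor-vanishing). For (v), both $\wchs$ and $\wgs$ are built from first Chern classes of line bundles by identical polynomial formulas; since the first Chern class in $\CH$ corresponds under $\varphi$ to its counterpart in the gamma filtration, the splitting principle yields the commutativity of the diagram for all $e \in \Kzh$.

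For (vi), surjectivity is immediate from d\'evissage: the classes $[\Oc_Z]$ with $\dim Z = k$ generate $\Kit{k}{X}$ modulo $\Kit{k-1}{X}$, and each lies in the image of $\varphi$. The torsion-kernel statement is the substantive point: I would invoke the Baum--Fulton--MacPherson Riemann-Roch map $\tau_X \colon \Kzl(X) \to \CH_\bullet(X) \otimes \Q$, which respects both filtrations and induces on graded pieces, after tensoring with $\Q$, a two-sided inverse to $\varphi \otimes \Q$. In particular $\varphi \otimes \Q$ is an isomorphism, so $\ker \varphi$ consists of torsion.
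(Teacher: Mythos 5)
Your overall route coincides with the paper's: (i), (ii), (iv), (v) by direct checks on generators and the splitting principle, (iii) by deformation to the normal cone, and the kernel statement in (vi) via the Baum--Fulton--MacPherson transformation $\tau$, which the paper likewise uses to produce a left inverse of $\varphi_X \otimes \Q$ on graded pieces. Two points, however, need attention.

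First, in (iii) you have named the crux but not crossed it. The paper resolves the ``main obstacle'' you flag by an explicit computation: for a regular closed embedding $X \hookrightarrow Y$ with normal bundle $N$ and any closed subvariety $Z \subset Y$, both specialization maps send the class of $Z$ to the class of the normal cone $C$ of $Z \cap X \hookrightarrow Z$, viewed as a closed subvariety of $N$; that is, $\sigma^{\CH}[Z]=[C]$ and $\sigma^{\Kzl}[\Oc_Z]=[\Oc_C]$ (the first is \cite[Proposition~52.7]{EKM}, the second has the same proof). Since $\dim C=\dim Z$, this gives compatibility of $\varphi$ with the two deformation homomorphisms on the nose, and (iii) then follows because the lci pull-backs in both theories are assembled from deformation homomorphisms and flat pull-backs. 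Fulton's Example~15.1.5 does not supply this step; the paper cites it only for proper push-forward and for surjectivity. Without this identity your argument for (iii) is incomplete.

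Second, your reduction of (iv) to $x \boxtimes y = p_X^{*}x \cdot p_Y^{*}y$ is not available here: the varieties are arbitrary, possibly singular, so $\CH(X \times Y)$ carries no intersection product. The exterior product of cycles is defined directly by $[V] \boxtimes [W] = [V \times W]$, and the matching identity $[\Oc_V] \times [\Oc_W] = [\Oc_{V \times W}]$ in $\Kzl(X \times Y)$ gives (iv) immediately on generators; the detour through Tor-vanishing is both unnecessary and ill-posed in this generality. The remaining parts of your outline are sound and agree with the paper's proof.
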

\begin{proof}
For \eqref{phiproper} and surjectivity in \eqref{phiisom}, see \cite[Example~15.1.5]{Ful-In-98}. Properties \eqref{phiextprod} and \eqref{phiflat} are easy to prove, and \eqref{phiw} is similar to \cite[Lemma~2.16]{Kar-Co-98}.\\

We now prove \eqref{philci}. Let $X \hookrightarrow Y$ be a regular closed embedding with normal bundle $N$. The map $\sigma^{\Kzl} \colon \Kzl(Y)\to \Kzl(N)$, called specialization in \cite[p.283]{gil-K-05} or \cite[p.352]{Ful-In-98}, respects the topological filtration, hence induces a deformation homomorphism $\sigma^{\gr \Kzl} \colon \Grts(Y) \to \Grts(N)$. Similarly, there is a deformation homomorphism $\sigma^{\CH} \colon \CH(Y) \to \CH(N)$ (see \cite[\S 5.2]{Ful-In-98}, or \cite[\S 51]{EKM}). Let $Z \hookrightarrow Y$ be a closed embedding. The normal cone $C$ of the induced closed embedding $Z\cap X \hookrightarrow Z$ can be viewed as a closed subvariety of $N$. We claim that
\[
\sigma^{\CH}[Z]=[C] \quad \text{ and } \quad \sigma^{\Kzl}[\Oc_Z]=[\Oc_C].
\]
This is immediate if one uses the definitions of \cite{Ful-In-98} for the deformation homomorphisms. Otherwise, the first formula is \cite[Proposition~52.7]{EKM}, while the second one can be obtained with the same proof. It follows that
\[
\varphi_N \circ \sigma^{\CH}[Z]=\varphi_N [C]=[\Oc_C]=\sigma^{\gr \Kzl}[\Oc_Z]=\sigma^{\gr \Kzl} \circ \varphi_Y[Z].
\]

Since pull-backs along local complete intersection morphisms are constructed out of deformation homomorphisms and flat pull-backs in both theories (\cite[Proposition~86]{gil-K-05} and \cite[\S 55.A]{EKM}) we see that $\varphi$ is compatible with pull-backs along local complete intersection morphisms. This proves \eqref{philci}.\\

It remains to prove the second part of \eqref{phiisom}, which is done in \cite[Example~15.3.6]{Ful-In-98} under the additional hypothesis that $X$ is smooth. When $X$ is arbitrary, we will provide a left inverse of $\varphi_X \otimes \Q$.

In \cite[Theorem~18.3]{Ful-In-98}, a natural transformation
\[
\tau \colon \Kzl \to \CH \otimes \Q
\]
of functors from the category of varieties and proper morphisms to the category of abelian groups is constructed. Consider both on $\Kzl(X)$ and on $\CH(X)\otimes \Q$ the filtration whose $k$-term is generated by $i_*\Kzl(Z)$ and $i_*\CH(Z) \otimes \Q$ for all closed embeddings $i \colon Z \hookrightarrow X$ with $\dim Z \leq k$. The associated graded groups are $\Grt{\bullet}{X}$ and $\CH_{\bullet}(X) \otimes \Q$, and since $\tau$ commutes with push-forwards, it induces a morphism of graded abelian groups
\[
\gr(\tau_X)_\Q \colon \Grt{\bullet}{X}\otimes \Q \to \CH_{\bullet}(X) \otimes \Q.
\]
It is also proven in \cite[Theorem~18.3]{Ful-In-98} that, for a closed $k$-dimensional subvariety $Z$ of $X$, we have 
\[
\tau_X[\Oc_Z]=[Z] \mod \CH_{<k}(X)\otimes \Q,
\]
hence $\gr(\tau_X)_\Q \circ (\varphi_X \otimes \Q)=\id_{\CH(X) \otimes \Q}$, as required.
\end{proof}

\section{Divisibility of some characteristic numbers}
\label{Sect:charac}
In this section we prove a result of independent interest about some characteristic numbers of varieties over an arbitrary field. This may provide some motivation for the sequel, see Remark~\ref{rem:segre} below.\\ 

When $X$ is a projective variety we consider the degree morphism 
\begin{equation}
  \label{eq:deg}
\deg \colon \Kzl(X) \to \Kzl(\pt) \xrightarrow{\rank} \Z,
\end{equation}
where the first map is the push-forward along the structural morphism. Since the map $\varphi \colon \CH_\bullet \to \Grts$ is compatible with proper push-forwards (Proposition~\ref{prop:phi},~\eqref{phiproper}), and respects the isomorphisms $\Kzl(\pt)\to \Z$ and $\CH(\pt) \to \Z$, we have a commutative diagram
\begin{equation}
  \label{eq:degcomp}
\xymatrix{
\CH_0(X)\ar[rr]^{\varphi_X} \ar[dr]_{\deg} && \Grt{0}{X}=\Fit{0}{X} \ar[dl]^{\deg} \\ 
 &\Z& 
}
\end{equation}

\begin{proposition}
\label{prop:segre}
Let $p$ be a prime number.  Let $X$ be a connected projective local complete intersection variety of positive dimension $k(p-1)$, with virtual tangent bundle $\Tan_X \in \Kzh(X)$. Then the degree of the class $\wch{k}{-\Tan_X}$ is divisible by $p$.
\end{proposition}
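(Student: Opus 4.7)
The plan is to reduce the statement to a $\pp$-adic valuation argument in the Grothendieck group via Proposition~\ref{prop:Bott}, invoking the Adams--Riemann--Roch theorem to supply integrality.

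First, Proposition~\ref{prop:phi}\eqref{phiw} together with the compatibility of $\varphi$ with proper push-forwards recorded in \eqref{eq:degcomp} lets me rewrite
\[
\deg \wch{k}{-\Tan_X} = \deg \bigl( \wg{k}{-\Tan_X} \cdot [\Oc_X] \bigr),
\]
an equality of integers, the right-hand side being the degree of an element of $\Kit{0}{X}$. Applying Proposition~\ref{prop:Bott} to $e = -\Tan_X$, whose rank equals $-k(\pp-1)$, I obtain elements $e_j \in \Fig{j(\pp-1)}{X}$ congruent to $\wg{j}{-\Tan_X}$ modulo $\Fig{j(\pp-1)+1}{X}$, such that
\[
\Th(-\Tan_X) = \sum_{j=0}^{k} \pp^{-k(\pp-1)-j} \otimes e_j \quad \text{in } \Zu \otimes \Kzh(X).
\]
Setting $a_j = \deg(e_j \cdot [\Oc_X]) \in \Z$ and multiplying this identity by $[\Oc_X]$ before taking degrees yields
\[
\deg \bigl( \Th(-\Tan_X) \cdot [\Oc_X] \bigr) = \sum_{j=0}^{k} \pp^{-k(\pp-1)-j} a_j
\]
in $\Zu$. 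Since $\dim X = k(\pp-1)$, formula \eqref{eq:gammatop} gives $\Fig{k(\pp-1)+1}{X} \cdot [\Oc_X] \subset \Kit{-1}{X} = 0$, so $e_k \cdot [\Oc_X] = \wg{k}{-\Tan_X} \cdot [\Oc_X]$ in $\Kit{0}{X}$; this identifies $a_k$ with the integer $\deg \wch{k}{-\Tan_X}$ whose $\pp$-divisibility is sought.

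The decisive input is to show that the left-hand side of the displayed equation, a priori only in $\Zu$, is actually an integer. I would obtain this from the Adams--Riemann--Roch theorem applied to the projective local complete intersection morphism $x \colon X \to \pt$, which in our context gives
\[
\ad \bigl( x_* [\Oc_X] \bigr) = x_* \bigl( \Th(-\Tan_X) \cdot [\Oc_X] \bigr).
\]
The left-hand side is the Euler characteristic $\chi(X, \Oc_X) \in \Z$, since $\ad$ acts trivially on $\Kzl(\pt) = \Z$. Multiplying the resulting identity by $\pp^{k(\pp-1)+k}$, every term $\pp^{k-j} a_j$ with $j < k$ carries a factor of $\pp$, forcing $a_k \equiv 0 \pmod{\pp}$.

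The principal technical point I anticipate is invoking Adams--Riemann--Roch in this generality: the variety $X$ is only assumed lci, not a priori regular, so one must rely on a version of the theorem valid for proper lci morphisms between possibly singular varieties, in which $\Th(-\Tan_X) \in \Zu \otimes \Kzh(X)$ acts on $\Zu \otimes \Kzl(X)$ through the natural module structure without any regularity hypothesis on the source.
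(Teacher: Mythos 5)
Your proof is correct and follows essentially the same route as the paper: Adams--Riemann--Roch with denominators applied to $X \to \pt$ supplies the integrality of $\deg\bigl(\Th(-\Tan_X)\cdot[\Oc_X]\bigr)$, Proposition~\ref{prop:Bott} gives the $p$-adic decomposition, and Proposition~\ref{prop:phi}~\eqref{phiw} with \eqref{eq:degcomp} identifies the top term with $\deg \wch{k}{-\Tan_X}$. The only (harmless) difference is bookkeeping: you take degrees termwise to land in $\Z$ immediately, whereas the paper first passes to the quotient $\widetilde{\K}(X)$ of $\Kzh(X)$ by its $p$-power torsion to phrase the congruence at the level of $K$-theory; your worry about applying Adams--Riemann--Roch to a possibly singular lci source is resolved by the same reference the paper cites, which requires only that the morphism be projective and lci.
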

\begin{proof} 
We apply the Adams Riemann-Roch theorem with denominators (\cite[Chapter~V, Theorem~7.6]{FL-Ri-85}) to the structural morphism $X \to \pt$. This gives a commutative diagram
\[
\xymatrix{
\Zu \otimes \Kzh(X) \ar[d]^{\deg} \ar[rrr]^{\Th(-\Tan_X) \circ (\id \otimes \ad)} &&& \Zu \otimes \Kzh(X) \ar[d]^{\deg}\\
\Zu \ar[rrr]_{\ad=\id} &&& \Zu.
}
\]
In particular
\begin{equation}
\label{degp}
\deg \circ \Th(-\Tan_X)=\deg[\Oc_X] \in \Z \subset \Zu.
\end{equation}

Let $\widetilde{\K}(X)$ be the quotient of $\Kzh(X)$ by its $p$-power torsion subgroup. Note that the degree map factors through $\widetilde{\K}(X)$. We view $\widetilde{\K}(X)$ as a subgroup of $\Zu \otimes \Kzh(X)$. Then by Proposition~\ref{prop:Bott}, the element $\pp^{kp} \cdot \Th(-\Tan_X)$ belongs to $\widetilde{\K}(X)$, and is congruent modulo $\pp \cdot \widetilde{\K}(X)$ to 
\[
\wg{k}{-\Tan_X} \in \Grg{k(p-1)}{X}=\Fig{k(p-1)}{X}.
\]
By \eqref{degp}, since $k\geq 1$, its degree has to be divisible by $\pp$. Using Proposition~\ref{prop:phi},~\eqref{phiw} and \eqref{eq:degcomp}, we see that $\deg \wg{k}{-\Tan_X}$ coincides with the degree of $\wch{k}{-\Tan_X}$. This proves the claim. 
\end{proof}

\begin{remark}
\label{rem:segre}
Under some assumptions about the characteristic of the base field, this lemma is well-known. More general forms have been proven using other methods : Steenrod operations in \cite{Mer-St-03} and \cite[\S~2.6.7]{Pan-Ri-02}, or algebraic cobordism in \cite{Lev-St-05}. 

When the characteristic of the base field is $p$, the case $p=2$ has been considered in \cite{Ros-On-08}, using the Frobenius map.
\end{remark}

\section{Torsion-free varieties, resolution of singularities}
\paragraph{\bf Torsion-free varieties}
We say that a variety $X$ is \emph{torsion-free} if its Chow group is without torsion. 
\begin{example}
A variety whose Chow motive splits as a sum of twisted Tate motives is torsion-free. In particular a cellular variety is torsion-free.
\end{example}

\begin{remark}
\label{rem:torfree}
It follows from Proposition~\ref{prop:phi}, \eqref{phiisom}, that when $X$ is torsion-free, the group $\Grt{\bullet}{X}$ is also torsion-free, being isomorphic to $\CH_{\bullet}(X)$.
\end{remark}

We will only apply Lemma~\ref{injl} and Proposition~\ref{cor:injl} for $p$ a prime integer, but this assumption is not required for these two statements.  

\begin{lemma}
\label{injl}
Let $\cdots \supset K_{n+1} \supset K_n \supset \cdots$ be a filtered group such that $\cap_i K_i=0$. Assume that the associated graded group $\gr_{\bullet}K$ is without torsion.  Set $K=\cup_i K_i$, and let $x \in K_n$ and $y \in K$ be elements satisfying, for some non-zero integers $p$ and $m$, the congruence
\[
m \cdot x=m p \cdot y \mod K_{n-1}.
\]
Then the image of $x$ in $\Z/p \otimes \gr_{n}K$ is zero.
\end{lemma}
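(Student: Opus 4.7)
The plan is to show that the class $\bar{x} \in \gr_{n} K$ of $x$ is divisible by $p$ inside $\gr_{n} K$; this is equivalent to the vanishing claim, and the lemma will follow. I would proceed in two steps: first verify that $y$ itself lies in $K_{n}$, and then read the divisibility off the hypothesis after reducing modulo $K_{n-1}$.

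For the first step, one may assume $y \neq 0$ (otherwise the hypothesis becomes $mx \in K_{n-1}$, so $m \bar{x} = 0$ and torsion-freeness of $\gr_{n} K$ forces $\bar{x} = 0$). Because $y \in K = \bigcup_{i} K_{i}$ and $\bigcap_{i} K_{i} = 0$, there is a least integer $N$ with $y \in K_{N}$. Suppose, for contradiction, that $N > n$. Then both $K_{n}$ and $K_{n-1}$ are contained in $K_{N-1}$, so the congruence $mx = mpy \pmod{K_{n-1}}$ forces $mpy \in K_{N-1}$. Passing to the class $\bar{y} \in \gr_{N} K$, this reads $mp \bar{y} = 0$; since $mp \neq 0$ and $\gr_{N} K$ is torsion-free, we obtain $\bar{y} = 0$, i.e.\ $y \in K_{N-1}$, contradicting the minimality of $N$. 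Thus $y \in K_{n}$.

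With $y \in K_{n}$, the hypothesis descends to the equation $m \bar{x} = mp \bar{y}$ in $\gr_{n} K$. Cancelling the non-zero integer $m$, which is legitimate by torsion-freeness of $\gr_{n} K$, yields $\bar{x} = p \bar{y}$. Consequently the image of $\bar{x}$ in $\Z/p \otimes \gr_{n} K$ is zero, as required.

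The argument has no real obstacle: divisibility falls out the moment $y$ has been placed at the correct filtration level. The only point deserving a moment of reflection is that one must invoke the separation hypothesis $\bigcap_{i} K_{i} = 0$ to pin down where $y$ sits, since otherwise nothing would prevent $y$ from living at a higher level than $x$ in the filtration.
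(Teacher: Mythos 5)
Your argument is correct and is essentially the paper's own proof: the same case split on $y=0$, the same minimal-index contradiction using separatedness and torsion-freeness to force $y\in K_n$, and the same final cancellation of $m$ in $\gr_n K$ to conclude $\bar{x}=p\bar{y}$. No issues.
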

\begin{proof}
Since $\gr_{\bullet}K$ is a torsion-free group, it follows that if $y=0$, then $x \in K_{n-1}$, and its image in the $n$-th graded group is zero.

Hence we may assume $y \neq 0$. Let $d$ be the integer such that $y \in K_{d}$ and $y \notin K_{d-1}$ (such an integer certainly exists because $y\notin \cap_i \K_{i}=0$). Assume $d > n$. Then $m p \cdot y \in K_{n} \subset K_{d-1}$, hence $m p \cdot y$ is zero in the torsion-free group $\gr_{d}K$, which is incompatible with the choice of the integer $d$. Hence $d \leq n$ and  $y \in K_{n}$. Using the fact that $\gr_{n}K$ has no torsion, the assertion follows.
\end{proof}

\begin{proposition}
\label{cor:injl}
Let $X$ be a torsion-free variety, and $p$ an integer. Let $u_k \in \Fit{d-k(p-1)}{X}$ for $k=0,\cdots,d$ and assume that there is an element $u_{-1} \in \Kzl(X)$ such that
\[
\sum_{k=-1}^{d} p^{-k} \otimes u_k=0 \in \Zu \otimes \Kzl(X).
\]
Then for all $k=0,\cdots,d$, the image of $u_k$ in $\Z/p \otimes \Grt{d-k(p-1)}{X}$ is zero. 
\end{proposition}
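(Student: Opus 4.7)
My plan is to apply Lemma~\ref{injl} to the topological filtration on $K := \Kzl(X)$. Since $X$ is torsion-free, Remark~\ref{rem:torfree} ensures that the associated graded group $\Grts(X)$ has no torsion; the topological filtration also satisfies $\bigcap_i \Fit{i}{X} = \Fit{-1}{X} = 0$ and $\bigcup_i \Fit{i}{X} = \Kzl(X)$. Hence for each $k \in \{0,\ldots,d\}$ it will suffice to produce a nonzero integer $m$ and an element $y \in \Kzl(X)$ such that
\[
m \cdot u_k \equiv m p \cdot y \pmod{\Fit{d-k(p-1)-1}{X}},
\]
because Lemma~\ref{injl} will then force $u_k$ to vanish in $\Z/p \otimes \Grt{d-k(p-1)}{X}$.

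First I would clear denominators in the hypothesis. Multiplying the relation $\sum_{k=-1}^{d} p^{-k} \otimes u_k = 0$ in $\Zu \otimes \Kzl(X)$ by $p^d$, and using that $\ker\bigl(\Kzl(X) \to \Zu \otimes \Kzl(X)\bigr)$ consists of $p$-power torsion elements, I obtain, for some integer $N \geq 0$, an identity
\[
\sum_{k=-1}^{d} p^{N+d-k}\, u_k = 0 \quad \text{in } \Kzl(X),
\]
in which every coefficient is a non-negative integer power of $p$.

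Now fix $k \in \{0,\ldots,d\}$ and isolate the term $p^{N+d-k} u_k$ in this equation. Assuming $p \geq 2$ (the case $p \leq 1$ giving a trivial conclusion), for each $j > k$ the inclusion $u_j \in \Fit{d-j(p-1)}{X} \subseteq \Fit{d-k(p-1)-1}{X}$ follows from $(j-k)(p-1) \geq 1$, so these summands vanish modulo $\Fit{d-k(p-1)-1}{X}$. For each $j \in \{-1,\ldots,k-1\}$, the exponent $N+d-j$ strictly exceeds $N+d-k$, which allows the factorization $p^{N+d-j} = p \cdot p^{N+d-k} \cdot p^{k-j-1}$ with $k-j-1 \geq 0$. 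Setting $m := p^{N+d-k}$ and $y := -\sum_{j=-1}^{k-1} p^{k-j-1}\, u_j \in \Kzl(X)$ then yields the congruence demanded by Lemma~\ref{injl}. The only delicate step is the bookkeeping of exponents: one must check that the powers of $p$ appearing in $y$ remain non-negative so that $y$ genuinely lies in $\Kzl(X)$ rather than in its localization, and that the $j > k$ summands fall deep enough into the filtration to be absorbed into the error term; both are routine once the setup is in place.
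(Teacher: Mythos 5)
Your proof is correct and follows essentially the same route as the paper: clear denominators, isolate the $k$-th term, absorb the $j>k$ terms into $\Fit{d-k(p-1)-1}{X}$, and apply Lemma~\ref{injl} with $n=d-k(p-1)$ and $y=-\sum_{j<k}p^{k-j-1}u_j$. The only (harmless) deviation is your extra factor $p^N$ to kill the kernel of localization; the paper instead observes that $\Grts(X)$ torsion-free forces $\Kzl(X)$ itself to be torsion-free, so one can take $N=0$ and $m=p^{d-k}$.
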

\begin{proof}
In view of Remark~\ref{rem:torfree}, the group $\Grts(X)$ is torsion-free. Therefore the group $\Kzl(X)$ has to be also torsion-free. We have for all $k=0,\cdots,d$ a congruence,
\[
p^{d-k} \cdot u_k = p^{d-k} p \cdot \left(- \sum_{i=-1}^{k-1} p^{k-i-1} \cdot u_i\right) \mod \Fit{d-k(p-1)-1}{X}.
\]
We are in position to apply Lemma~\ref{injl} above, with $K_\bullet=\Fit{\bullet}{X}$, $n=d-k(p-1)$, $m=p^{d-k}$ and $x=u_k$.
\end{proof}

\paragraph{\bf Resolution of singularities} An \emph{alteration} of an integral variety $X$ is an integral variety $X'$ with a projective generically finite morphism $X' \to X$, and $\dim X'=\dim X$. The \emph{degree} of the alteration $X'$ is the degree of the extension of function fields $F(X')/F(X)$.

\begin{definition}
\label{def:p-res}
Given a prime integer $\pp$, we shall say that a field $F$ \emph{admits $\pp$-resolution of singularities} if every integral variety $X$ over $F$ admits a regular alteration of degree prime to $\pp$.
\end{definition}

\begin{remark}
Ofer Gabber has proved that any field of characteristic not $\pp$ admits $\pp$-resolution of singularities (see \cite{Gabber}). We shall not explicitly use this result here.
\end{remark}

We shall say that a variety $X$ is \emph{generated by $p$-regular classes}, when the group $\Grt{\bullet}{X}$ is generated by elements $z$, satisfying $\lambda \cdot  z=f_*[\Oc_Z]$, for some (depending on $z$) integer $\lambda$ prime to $p$, and some projective morphism $f \colon Z \to X$, with $Z$ a local complete intersection variety.\\

\begin{example}
\label{ex:vb}
Assume that a variety $X$ is generated by $p$-regular classes. If $\pi \colon E \to X$ is a vector bundle, then $E$ is also generated by $p$-regular classes. This is a consequence of the following statements:

---  Assuming that $\pi$ is of constant rank $e$, the pull-back $\pi^* \colon \Grt{\bullet}{X} \to \Grt{\bullet+e}{E}$ is an isomorphism (\cite[Lemma~85]{gil-K-05}).

--- Let $Z \to X$ be a projective morphism. If $Z$ is a local complete intersection variety, then so is $Z \times_X E$.
\end{example}

We consider the following condition on a variety $X$ over a field $F$ and a prime integer $\pp$:
\begin{condition}
\label{def:cond}
The variety $X$ is generated by $\pp$-regular classes, and there is a field extension $\ba{F}$ of $F$ such that the variety $\ba{X}:=X \times_{\Spec(F)} \Spec(\ba{F})$ is torsion-free. We require additionally that $\ba{X}$ be generated by $\pp$-regular classes.
\end{condition}

We now give two examples of varieties satisfying Condition~\ref{def:cond}.

\begin{example}
\label{ex:split_phv}
Let $X$ be a split projective homogeneous variety under a semi-simple algebraic group over an arbitrary field $F$. Set $\ba{F}=F$. Then the Chow motive of the variety $X=\ba{X}$ splits as a sum of Tate motives by the results of \cite{Ko-91}, hence the variety $X$ is torsion-free. The ring $\CH_\bullet(X)$ and $\Grt{\bullet}{X}$ are then isomorphic, and using the results of \cite{Dem-De-74}, we see that $X$ is generated by $\pp$-regular classes, for any prime integer $\pp$. 
\end{example}

\begin{example}
\label{ex:phv}
Let $X$ be a projective homogeneous variety under a semi-simple algebraic group over a field $F$ admitting $\pp$-resolution of singularities. Let $\ba{F}$ be an algebraic closure of $F$. Then $X$ is generated by $\pp$-regular classes. By Example~\ref{ex:split_phv}, the split projective homogeneous variety $\ba{X}$ is torsion-free, and $\ba{X}$ is generated by $\pp$-regular classes. 
\end{example}

\section{Adams operations and the topological filtration}
When $f \colon X \to Y$ is a proper (\emph{resp.} local complete intersection) morphism, we shall also write $f_*$ (\emph{resp.} $f^*$) for the map $\id_{\Zu} \otimes f_*$ (\emph{resp.} $\id_{\Zu} \otimes f^*$).\\

\paragraph{\bf Homological Adams operations}(See  \cite[Th\'eor\`eme~7]{Sou-Op-85})  These are operations 
\[
\adp \colon \Kzl \to \Zu \otimes \Kzl
\]
commuting with proper push-forwards, external products, and satisfying the formula, for $f\colon X \to Y$ a local complete intersection morphism with virtual tangent bundle $\Tan_f \in \Kzh(X)$,
\begin{equation}
  \label{eq:RR}
  \adp \circ f^*=\Th(-\Tan_f) \circ f^* \circ \adp.
\end{equation}
In addition, $\adp\colon \Kzl(\pt)\to \Zu\otimes\Kzl(\pt)$ is the inclusion $\Z \subset \Zu$.

These operations are constructed as follows. Let $X$ be variety. We choose an embedding $j \colon X \hookrightarrow M$ in a regular variety $M$. Then we write $\Kzh_X(M)$ for the Grothendieck group of bounded complexes of locally free $\Oc_M$-modules acyclic off $X$, modulo the subgroup generated by acyclic complexes. One can then construct a $p$-th Adams operation ``with supports'' $\ad \colon \Kzh_X(M) \to \Kzh_X(M)$. Note that while the construction of this operation is not completely immediate in general (see for example \cite[Chapters 3 and 4]{GS-87}), the situation becomes considerably simpler over fields of characteristic $p$, thanks to the Frobenius morphism.

There is a cap product $\cap \colon \Kzh_X(M) \otimes \Kzl(M) \to \Kzl(X)$, which induces an isomorphism $-\cap [\Oc_M] \colon \Kzh_X(M) \to \Kzl(X)$ because $M$ is regular (\cite[Lemma~1.9]{GS-87}).

Now let $x \in \Kzl(X)$, and $y \in \Kzh_X(M)$ the element such that $y \cap[\Oc_M]=x$. Let $\Tan_M \in \Kzh(M)$ be the virtual tangent bundle of $M$, and set
\begin{equation}
\label{eq:homad}
\adp(x)=\Th(-j^*\Tan_M) \cdot \big( \ad(y) \cap [\Oc_M]\big).
\end{equation}
Using the Riemann-Roch theorem for the Adams operation with supports, one can check that the value of $\adp(x)$ does not depend on the choices of $j$ and $M$. See \cite[Proposition~2.8]{duality} for a detailed proof of the formula \eqref{eq:RR}.\\

We now record a property of the homological Adams operation.
\begin{lemma}
\label{lemm:integrality}
Let $X$ be a variety and $x \in \Fit{d}{X}$. Then we have
\[
\adp(x)=p^{-d} \otimes x \mod \Zu\otimes \Fit{d-1}{X}.
\]
\end{lemma}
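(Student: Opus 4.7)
The plan is to reduce the statement to a local calculation on an integral variety and then invoke the Riemann-Roch formula \eqref{eq:RR} together with Proposition~\ref{prop:Bott}. My first step would be to use that $\Kit{d}{X}$ is generated by pushforwards $i_*[\Oc_Z]$, with $i \colon Z \hookrightarrow X$ a closed embedding of an integral subvariety of dimension at most $d$, together with the covariance of $\adp$ under proper maps. This shows at once that $\adp$ preserves the topological filtration, trivially handles the case $\dim Z < d$, and reduces the statement to the claim that for each integral $Z$ of dimension exactly $d$,
\[
\adp[\Oc_Z] \equiv p^{-d} \otimes [\Oc_Z] \pmod{\Zu \otimes \Kit{d-1}{Z}}.
\]

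Next, I would exploit generic regularity: since varieties are excellent, the regular locus $j \colon U \hookrightarrow Z$ of the integral variety $Z$ is open and dense. On $U$, my plan is to apply \eqref{eq:RR} to the structural morphism $u \colon U \to \pt$. The key observation is that even when $U$ fails to be smooth over the base field (as can happen in positive characteristic over imperfect fields), $u$ is still a local complete intersection morphism: one embeds $U$ as a closed subscheme of some open $M \subseteq \mathbb{P}^N$ (e.g.\ $M = \mathbb{P}^N \setminus (\overline{U} \setminus U)$), and a closed embedding of regular noetherian schemes $U \hookrightarrow M$ is automatically a regular embedding. Then \eqref{eq:RR} gives $\adp[\Oc_U] = \Th(-\Tan_U) \cdot [\Oc_U]$; Proposition~\ref{prop:Bott} expands $\Th(-\Tan_U) = \sum_{k} p^{-d-k} \otimes e_k$ with $e_k \in \Fig{k(p-1)}{U}$ and $e_0 \equiv 1 \pmod{\Fig{1}{U}}$; and \eqref{eq:gammatop} then ensures that all $k \geq 1$ terms contribute to $\Kit{d-1}{U}$, as does the correction $(e_0 - 1) \cdot [\Oc_U]$ for $k=0$. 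This yields the target congruence on $U$.

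The last step is to transfer the congruence from $U$ back to $Z$. Since the open immersion $j$ is flat with trivial virtual tangent bundle, \eqref{eq:RR} yields $j^* \circ \adp = \adp \circ j^*$, and hence the difference $D := \adp[\Oc_Z] - p^{-d} \otimes [\Oc_Z]$ pulls back via $j^*$ to an element of $\Zu \otimes \Kit{d-1}{U}$. I would conclude by combining two elementary observations: every closed subvariety of $U$ of dimension at most $d-1$ extends by closure to one in $Z$, so that $j^* \colon \Kit{d-1}{Z} \to \Kit{d-1}{U}$ is surjective; and the localization sequence $\Kzl(Y) \to \Kzl(Z) \to \Kzl(U) \to 0$ for $Y = Z \setminus U$ controls $\ker j^*$ inside $\Kit{d-1}{Z}$, since $\dim Y \leq d-1$.

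The main difficulty is conceptual rather than computational: $Z$ need not admit any smooth open subscheme when the base field is imperfect, so one cannot hope to find a smooth resolution on which to apply Riemann-Roch directly. The work is done by replacing smoothness by regularity at the level of the generic open $U$, and then using the embedding into $\mathbb{P}^N$ to gain an auxiliary lci structure.
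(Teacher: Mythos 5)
Your proposal is correct and follows essentially the same route as the paper: reduce to $[\Oc_Z]$ for $Z$ integral of dimension $d$, pass to the regular open locus, apply \eqref{eq:RR} to the structural morphism there, and return via the localization sequence. The only divergence is in the endgame: where you expand $\Th(-\Tan_U)$ via Proposition~\ref{prop:Bott} and use \eqref{eq:gammatop} to push the higher terms into $\Fit{d-1}{U}$, the paper instead shrinks once more to an open subset $V$ where the virtual tangent bundle is trivial and evaluates $\Th(-d\cdot[\Oc_V])=p^{-d}\otimes[\Oc_V]$ directly before invoking the localization kernel a second time --- both variants are valid.
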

\begin{proof}
First, using the linearity of $\adp$ and its compatibility with push-forwards along closed embeddings, we reduce the question to the case $x=[\Oc_X]$, and $X$ an integral variety of dimension $d$.
  
Let $s \colon S \hookrightarrow X$ be the reduced closed subvariety of $X$, whose points are those $y \in X$ such that the local ring $\Oc_{X,y}$ is not a regular local ring. Let $u \colon U \to X$ be the open complementary subscheme --- the scheme $U$ is a regular variety. We have the localization sequence 
\[
\Zu \otimes \Kzl(S) \xrightarrow{s_*} \Zu \otimes \Kzl(X) \xrightarrow{u^*} \Zu \otimes \Kzl(U) \to 0.
\]
Moreover the map $\Zu \otimes \Fit{d-1}{X} \to \Zu \otimes\Fit{d-1}{U}$ is surjective, and $\adp \circ u^*=u^* \circ \adp$ by \eqref{eq:RR}. Therefore we can replace $X$ by $U$, and assume that $X$ is a regular variety.

In this situation, because of \eqref{eq:RR}, we have $\adp(x)=\Th(-\Tan_X)\cdot[\Oc_X]$. Writing $\Tan_X$ as $[E]-[F]$ for some vector bundles $E$ and $F$ over $X$, we take an open non-empty subvariety $v \colon V \to X$, where both $E$ and $F$ are free. We get 
\begin{align*}
 v^*\circ \adp(x)&=v^* \circ \Th(-\Tan_X)\\
 &=\Th \circ v^*(-\Tan_X)\\
 &=\Th \big( \rank(-\Tan_X) \cdot [\Oc_V])\\
 &=\Th(-d\cdot[\Oc_V])\\
 &=([\Oc_V]^0 + \cdots + [\Oc_V]^{1-p})^{-d}\\
 &=p^{-d}\otimes [\Oc_V]\\
 &=v^*(p^{-d}\otimes x).
\end{align*}
We conclude using the inclusion $\ker (v^*) \subset \Zu \otimes \Fit{d-1}{X}$, which follows from the localization sequence.
\end{proof}

When resolution of singularities is available, one can make a more precise statement : a consequence of Theorem~\ref{ad_main} below is that, for $x \in \Fit{d}{X}$, the element 
\[
p^{d+\left[ \frac{d}{p-1} \right]}\cdot \adp(x) - p^{\left[ \frac{d}{p-1} \right]}\otimes x
\]
belongs to the image of the map $\Fit{d-1}{X} \to \Zu \otimes \Kzl(X)$.\\

\paragraph{\bf Atiyah's decomposition}
We now state and prove the algebraic analogue of \cite[Proposition~5.6]{Ati-Po-66}.

\begin{theorem}
\label{ad_main}
Let $X$ be a variety and $x \in \Kit{d}{X}$. Assume that $X$ is generated by $\pp$-regular classes. Then we may find elements $x_k \in \Kit{d-k(p-1)}{X}$ such that we have in $\Zu \otimes \Kzl(X)$
\[
\adp(x)=\sum_{k=0}^{\left[ \frac{d}{p-1} \right]} \pp^{-d-k} \otimes x_k.
\]
Moreover, one can choose $x_0$ so that $x=x_0 \mod \Fit{d-1}{X}$.
\end{theorem}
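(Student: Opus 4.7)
The plan is to induct on $d$. The base case $d=0$ is immediate: since $\Fit{-1}{X}=0$, Lemma~\ref{lemm:integrality} gives $\adp(x) = 1\otimes x$, so $x_0 := x$ satisfies both claims.

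For the inductive step, I use the $p$-regular generation hypothesis together with Riemann-Roch. Given $x \in \Kit{d}{X}$, choose an integer $\lambda$ prime to $p$, integers $\beta_i$, projective morphisms $f_i\colon Z_i \to X$ with $Z_i$ an l.c.i.\ variety of dimension $d$, and $y \in \Fit{d-1}{X}$ such that $\lambda x = \sum_i \beta_i\, f_{i,*}[\Oc_{Z_i}] + y$. Applying $\adp$ and using~\eqref{eq:RR} on the l.c.i.\ structural map $Z_i \to \pt$, we get $\adp(f_{i,*}[\Oc_{Z_i}]) = f_{i,*}(\Th(-\Tan_{Z_i}) \cdot [\Oc_{Z_i}])$. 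Expanding $\Th(-\Tan_{Z_i})$ via Proposition~\ref{prop:Bott} (noting $\rank(-\Tan_{Z_i}) = -d$ and $e_0 \equiv 1 \bmod \Fig{1}{Z_i}$) and invoking~\eqref{eq:gammatop} yields $\adp(f_{i,*}[\Oc_{Z_i}]) = \sum_k p^{-d-k}\otimes u_{i,k}$ with $u_{i,k} \in \Kit{d-k(p-1)}{X}$ and $u_{i,0} \equiv f_{i,*}[\Oc_{Z_i}] \bmod \Fit{d-1}{X}$. Applying the inductive hypothesis to $y$ and combining (using the shift $p^{-(d-1)-k}\otimes v = p^{-d-k}\otimes pv$) produces
\[
\lambda\,\adp(x) = \sum_{k=0}^{\left[d/(p-1)\right]} p^{-d-k}\otimes \tilde{x}_k,
\]
with $\tilde{x}_k \in \Kit{d-k(p-1)}{X}$ and $\tilde{x}_0 \equiv \lambda x \bmod \Fit{d-1}{X}$.

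The hard step is removing the factor $\lambda$, since $\lambda$ is not invertible in $\Zu$. My strategy is to set $N := \left[d/(p-1)\right]$ and $W := p^{d+N}\adp(x)$, so the preceding identity rewrites as $\lambda W = \sum_k p^{N-k}\tilde{x}_k$, which lies in the subgroup
\[
H := \sum_{k=0}^N p^{N-k}\,\Kit{d-k(p-1)}{X} \subseteq \Kit{d}{X}.
\]
A B\'ezout argument using $\gcd(\lambda,p^s)=1$, combined with the $p$-torsion freeness of $\Zu\otimes\Kzl(X)$ and Lemma~\ref{lemm:integrality} (which places $W$ in $\Zu\otimes\Kit{d}{X}$), first shows that $W$ itself lies in $\Kit{d}{X}$. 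Since $H$ contains $p^N \Kit{d}{X}$ as its $k=0$ summand, a second B\'ezout step $a\lambda + bp^{N+1} = 1$ gives $W = a\lambda W + bp^{N+1} W \in H + H = H$; writing $W = \sum_k p^{N-k}x_k$ with $x_k \in \Kit{d-k(p-1)}{X}$ then produces $\adp(x) = p^{-d-N}\otimes W = \sum_k p^{-d-k}\otimes x_k$. For the \emph{moreover} clause, Lemma~\ref{lemm:integrality} forces $W \equiv p^N x \bmod \Zu\otimes\Fit{d-1}{X}$, and the freedom to redistribute terms between $x_0$ and the $k \geq 1$ summands (modulo $p$-power torsion, which is harmless in $\Zu\otimes\Kzl(X)$) lets one arrange $x_0 \equiv x \bmod \Fit{d-1}{X}$.
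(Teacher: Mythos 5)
Your overall strategy is essentially the paper's: reduce via the $p$-regularity hypothesis to push-forwards $f_*[\Oc_Z]$ from local complete intersection varieties, apply \eqref{eq:RR} and Proposition~\ref{prop:Bott} to obtain a decomposition of $\lambda\cdot\adp(x)$, and then remove the prime-to-$p$ factor $\lambda$ by a B\'ezout argument whose other ingredient is Lemma~\ref{lemm:integrality}. The paper organizes this generator by generator (writing $\adp(z)=u\lambda\cdot\adp(z)+p^{n-d}v\cdot\adp(z)$ with $u\lambda+vp^{n-d}=1$ and reading off the $x_k$ explicitly), whereas you carry the whole class $x$ and perform two B\'ezout steps against the subgroup $H$; both routes give the displayed decomposition, and that part of your argument is sound.

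The gap is in your treatment of the \emph{moreover} clause. The congruence $W\equiv p^N\otimes x$ supplied by Lemma~\ref{lemm:integrality} holds only in $\Zu\otimes\Kzl(X)$ modulo $\Zu\otimes\Fit{d-1}{X}$; combined with $W=\sum_k p^{N-k}\otimes x_k$ it yields $1\otimes(x_0-x)\in\Zu\otimes\Fit{d-1}{X}$, hence only that $p^M(x_0-x)\in\Fit{d-1}{X}$ for some $M\geq 0$. Since $\Kzl(X)/\Fit{d-1}{X}$ may well have $p$-torsion, this does not give $x_0\equiv x \mod \Fit{d-1}{X}$. Nor can you ``redistribute'' the discrepancy $\delta=x_0-x$ into the $k\geq1$ summands: those must lie in $\Fit{d-k(p-1)}{X}\subset\Fit{d-1}{X}$ and carry a factor $p^{N-k}$, so absorbing $p^N\otimes\delta$ there would require exactly the kind of expression for $\delta$ whose existence is in question. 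The repair is to keep integral representatives explicit through your two B\'ezout identities: the first one produces an integral lift $W'$ of $W$ built from $S$ and $p^{M+N}x+\beta$ (with $\beta\in\Fit{d-1}{X}$), which satisfies the \emph{integral} congruence $W'\equiv p^{N}x \mod \Fit{d-1}{X}$; the second one, $a'\lambda+b'p^{N}=1$, then gives $x_0=a'\tilde{x}_0+b'W'\equiv a'\lambda x+b'p^{N}x=x \mod \Fit{d-1}{X}$. This is precisely how the paper secures $x_0$, via the formula $x_0=u\cdot f_*(z_0)+v\cdot(p^{n-d}z+\alpha)$.
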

\begin{proof}
The group $\Fit{d}{X}$ is additively generated by the subgroup $\Fit{d-1}{X}$ and elements $z$ satisfying $\lambda \cdot z=f_*[\Oc_Z]$, for some local complete intersection $d$-dimensional variety $Z$, some projective morphism $f \colon Z \to X$, and some prime to $p$ integer $\lambda$. Reasoning by induction on $d$, we see that it will be enough to prove the statement for elements $x=z$ as above. We have, using \eqref{eq:RR} and Proposition~\ref{prop:Bott}
\[
\lambda \cdot \adp(z) = \adp \circ f_*[\Oc_Z] =f_* \circ \adp[\Oc_Z]  =f_* \big( \Th(-\Tan_Z)\cdot[\Oc_Z] \big)=\sum_{k=0}^{\left[ \frac{d}{p-1} \right]} \pp^{-d-k} \otimes f_*(z_k),
\]
the element $z_k$ being a lifting of $\wg{k}{-\Tan_Z}\cdot[\Oc_Z]$ to $\Kzl(Z)$, for every $k$. Thus by \eqref{eq:gammatop}, the element $f_*(z_k)$ belongs to $\Fit{d-k(p-1)}{X}$, for every $k$.

Using Lemma~\ref{lemm:integrality}, we find an integer $n\geq d$, and $\alpha \in \Fit{d-1}{X}$ such that $\adp(z)=p^{-d} \otimes z + p^{-n} \otimes \alpha$. Pick integers $u$ and $v$ such that $\lambda u=1-\pp^{n-d} v$. Then we have in the group $\Zu \otimes \Kzl(X)$ 
\begin{align*} 
\adp(z) &= u \lambda \cdot \adp(z) + \pp^{n-d} v \cdot\adp(z) \\ 
&=p^{-d}  \big( u \otimes f_*(z_0) + v p^{n} \cdot \adp(z) \big) + \sum_{k=1}^{\left[ \frac{d}{p-1} \right]} u\pp^{-d-k} \otimes f_*(z_k). 
\end{align*}
Then we set $x_0=u \cdot f_*(z_0) + v\cdot (p^{n-d}z+\alpha)$  and $x_k=u \cdot f_*(z_k)$ for $k \geq 1$. This proves the formula.\\

Now we have 
\begin{align*}
x_0&=u \cdot f_*(z_0) + vp^{n-d}\cdot z &\mod \Fit{d-1}{X}\\
&=u\lambda \cdot z + vp^{n-d}\cdot z &\mod \Fit{d-1}{X}\\
&=z &\mod \Fit{d-1}{X}
\end{align*}
This proves the statement about the choice of $x_0$. 
\end{proof}

\section{Steenrod operations on reduced Chow groups}
In this section $X,F,\ba{F}$ will be data satisfying Condition~\ref{def:cond}. We write $\Ch=\Z/p \otimes \CH$ for the Chow group modulo $p$, and consider the \emph{reduced Chow group modulo $p$} 
\[
\Chb(X) := \im \big(\Ch(X) \to \Ch(\ba{X})\big),
\]
the image of the pull-back along the scalars extension morphism $\ba{X} \to X$. For an element $y \in \Kzl(X)$, we shall denote by $\ba{y}$ its image in $\Kzl(\ba{X})$. Similarly, we consider the group 
\[
\ba{G}(X)=\im\big(\Z/p \otimes \Grts(X) \to \Z/p \otimes \Grts(\ba{X}) \big).
\]

We have a commutative diagram
\[
\xymatrix{
\Ch_{\bullet}(X)\ar[rrr]^{\id_{\Z/p} \otimes \varphi_X} \ar[d] &&& \Z/p \otimes \Grts(X)  \ar[d]\\ 
\Ch_{\bullet}(\ba{X}) \ar[rrr]^{\id_{\Z/p} \otimes \varphi_{\ba{X}}} &&& \Z/p \otimes \Grts(\ba{X}).
}
\]
There is an induced morphism $\Chb(X) \to \ba{G}(X)$, which is both injective, as a restriction of the isomorphism $\id_{\Z/p} \otimes \varphi_{\ba{X}}$ (Remark~\ref{rem:torfree}), and surjective, because  $\id_{\Z/p} \otimes \varphi_{X}$ is so (Proposition~\ref{prop:phi}, \eqref{phiisom}).\\

We now construct operations $\ba{G}(X) \to \ba{G}(X)$. Let $d$ be an integer and $x \in \Fit{d}{X}$. Choose elements $x_k \in \Fit{d-k(p-1)}{X}$, for $k=0,\cdots,[d/(p-1)]$, as in Theorem~\ref{ad_main}. Their images in $\Z/p \otimes \Grts(\ba{X})$ do not depend on the choice of the elements $x_k$, by Proposition~\ref{cor:injl} applied with $u_k=x_k$ and $u_{-1}=0$. 

If $\ba{x} \in \Fit{d-1}{\ba{X}}$, then consider elements $x_k' \in \Fit{d-1-k(p-1)}{\ba{X}}$ associated with $\ba{x}$ by Theorem~\ref{ad_main}, for the variety $\ba{X}$. It follows from Proposition~\ref{cor:injl} (applied with $u_{-1}=0$) that each element $u_k=p\cdot x_k'-\ba{x_k}$ has image zero in $\Z/p \otimes \Grt{d-k(p-1)}{\ba{X}}$. Since $p\cdot x_k'$ is zero in this group, we see that the element $x_k$ has also image zero in $\Z/p \otimes \Grt{d-k(p-1)}{\ba{X}}$.

Similarly, if $\ba{x}$ is divisible by $p$ in $\Fit{d}{\ba{X}}$, we see that each element $x_k$ has also image zero in $\Z/p \otimes \Grt{d-k(p-1)}{\ba{X}}$.\\

We have proven that the association $x \mapsto \ba{x_k}$ induces an endomorphism of the group $\ba{G}(X)$ which lowers the degree by $k(p-1)$. Using the identification with reduced Chow groups modulo $p$ obtained above, we get \emph{reduced Steenrod operations}
\[
\Sq^X_k \colon \Chb_{\bullet}(X) \to \Chb_{\bullet-k(p-1)}(X).
\]
We shall denote by $\Sq^X=\sum_k \Sq^X_k$ the total operation. We prove now some of the expected properties of this operation. 

\begin{proposition}
\label{prop:st}
Let $X$ and $Y$ be varieties over a field $F$, satisfying Condition~\ref{def:cond}. 
\begin{enumerate}[(a)]
\item \label{st:proper} Let $f \colon X \to Y$ be a proper morphism. Then $f_* \circ \Sq^X=\Sq^Y \circ f_*$.
\item \label{st:lci} Let $f \colon X \to Y$ be a local complete intersection morphism, with virtual tangent bundle $\Tan_f \in \Kzh(X)$. Then $\Sq^X \circ f^*=\wchs(-\Tan_f) \circ f^* \circ \Sq^Y$.
\item \label{st:extprod} Assume that $X \times Y$ satisfies Condition~\ref{def:cond}. We have $\Sq^{X \times Y}=\Sq^X \times \Sq^Y$.
\end{enumerate}
\end{proposition}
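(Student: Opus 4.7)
The plan is to derive each of the three properties from the analogous property of the homological Adams operation $\adp$, using the characterisation of $\Sq^X$ via the decomposition of Theorem~\ref{ad_main} and the uniqueness provided by Proposition~\ref{cor:injl} (applied over the splitting field, where the variety $\ba{X}$ is torsion-free). In every case the recipe is the same: start with a representative in $\Fit{\bullet}{X}$, apply the relevant compatibility of $\adp$, expand using Theorem~\ref{ad_main} (and, for (b), also Proposition~\ref{prop:Bott}), group the result in powers of $p$, and identify the coefficients with the desired composite by Proposition~\ref{cor:injl}.

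For \eqref{st:proper}, take $x \in \Fit{d}{X}$ with decomposition $\adp(x) = \sum_k p^{-d-k} \otimes x_k$ as in Theorem~\ref{ad_main}. Since proper push-forward respects the topological filtration and commutes with $\adp$, one obtains $\adp(f_*x) = \sum_k p^{-d-k} \otimes f_*(x_k)$ with $f_*(x_k) \in \Fit{d-k(p-1)}{Y}$. By uniqueness (Proposition~\ref{cor:injl} applied to $\ba{Y}$), this is the decomposition that computes $\Sq^Y(f_*[x])$, and we recognise $\overline{f_*(x_k)} = f_*(\overline{x_k})$. For \eqref{st:extprod}, use $\adp(x \boxtimes y) = \adp(x) \boxtimes \adp(y)$ for $x \in \Fit{d}{X}$, $y \in \Fit{e}{Y}$; multiplying the two decompositions and grouping by total order of $p$, we obtain $\adp(x \boxtimes y) = \sum_\ell p^{-(d+e)-\ell} \otimes \sum_{j+k=\ell} x_j \boxtimes y_k$, with each term lying in $\Fit{d+e-\ell(p-1)}{X \times Y}$ because external product of sheaves of dimensions $\leq a$ and $\leq b$ has dimension $\leq a+b$. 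Uniqueness on $\ba{X \times Y}$ and Proposition~\ref{prop:phi}\eqref{phiextprod} (compatibility of $\varphi$ with external products) translate this into the asserted formula.

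The main work is in \eqref{st:lci}. Fix $y \in \Fit{d}{Y}$, let $r = \rank \Tan_f$, and combine two expansions: Theorem~\ref{ad_main} gives $\adp(y) = \sum_k p^{-d-k} \otimes y_k$ with $y_k \in \Fit{d-k(p-1)}{Y}$, while Proposition~\ref{prop:Bott} gives $\Th(-\Tan_f) = \sum_j p^{-r-j} \otimes \theta_j$ with $\theta_j \in \Fig{j(p-1)}{X}$ congruent modulo $\Fig{j(p-1)+1}{X}$ to $\wg{j}{-\Tan_f}$. Applying $f^*$ to the first expansion (which raises the topological degree by $r$) and multiplying by the second, the Riemann--Roch relation \eqref{eq:RR} yields
\[
\adp(f^*y) = \sum_{\ell}  p^{-(d+r)-\ell} \otimes \Big( \sum_{j+k=\ell} \theta_j \cdot f^*(y_k) \Big),
\]
where each inner sum lies in $\Fit{d+r-\ell(p-1)}{X}$ by \eqref{eq:gammatop}. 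This is a valid decomposition of $\adp(f^*y)$ in the sense of Theorem~\ref{ad_main}, so by Proposition~\ref{cor:injl} the reduced class $\Sq^X_\ell(f^*[y])$ is represented, modulo $p$ in $\Grt{d+r-\ell(p-1)}{\ba{X}}$, by $\sum_{j+k=\ell} \ba{\theta_j} \cdot \ba{f^*(y_k)}$. Passing to the associated graded, $\theta_j$ acts as $\wg{j}{-\Tan_f}$, so Proposition~\ref{prop:phi}\eqref{phiw} identifies this with the image under $\varphi$ of $\wchs_j(-\Tan_f) \circ f^* \circ \Sq^Y_k([y])$, giving the $\ell$-th component of $\wchs(-\Tan_f) \circ f^* \circ \Sq^Y([y])$.

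The delicate step is \eqref{st:lci}: one has to keep track of two indexings in parallel, verify that the resulting decomposition lies in the right topological filtration levels so that Proposition~\ref{cor:injl} applies verbatim, and pass from the Grothendieck-theoretic action of $\theta_j$ to the Chow-theoretic operator $\wchs_j(-\Tan_f)$ via Proposition~\ref{prop:phi}\eqref{phiw}. Everything else is a bookkeeping exercise in the multiplicativity of $\adp$ and the fact that $\Sq^X$ is well defined on $\ba{G}(X) \cong \Chb(X)$.
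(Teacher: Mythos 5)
Your proof is correct and follows exactly the route the paper takes: the paper's own proof is a one-line reduction to the corresponding properties of $\adp$ (push-forward compatibility, external products, and the Riemann--Roch relation \eqref{eq:RR} combined with Proposition~\ref{prop:Bott} for the pull-back case), with Proposition~\ref{prop:phi} and the uniqueness from Proposition~\ref{cor:injl} doing the identification, and your write-up is a faithful expansion of precisely that argument.
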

\begin{proof}
These statements are consequences of the corresponding statements for the homological Adams operation and Proposition~\ref{prop:phi}; in addition Proposition~\ref{prop:Bott} and \eqref{eq:RR} are used to prove \eqref{st:lci}.
\end{proof}

\begin{proposition}
\label{prop:stzero}
Let $X$ be a variety satisfying Condition~\ref{def:cond}. Then 
\[
\Sq^X_0=\id_{\Chb(X)}.
\]
\end{proposition}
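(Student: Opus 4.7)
The plan is to trace an element of $\Chb_\bullet(X)$ through the construction of $\Sq^X_0$ and invoke the ``moreover'' clause of Theorem~\ref{ad_main}. The point is that $\Sq^X_0$ was defined using the representative $x_0$ from that theorem, and the theorem already guarantees $x_0 \equiv x \mod \Fit{d-1}{X}$; this is exactly the congruence needed to conclude that $\Sq^X_0$ acts as the identity.

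In detail, I would first use the isomorphism $\Chb_\bullet(X) \xrightarrow{\sim} \ba{G}_\bullet(X)$ established at the beginning of the section to represent a given class by the image in $\Z/p \otimes \Grt{d}{\ba{X}}$ of an element $x \in \Fit{d}{X}$ for some $d$. Then I apply Theorem~\ref{ad_main} to $x$, producing elements $x_k \in \Fit{d-k(p-1)}{X}$ with
\[
\adp(x) = \sum_{k=0}^{[d/(p-1)]} p^{-d-k} \otimes x_k,
\]
and, by the ``moreover'' part of that theorem, with $x_0 \equiv x \mod \Fit{d-1}{X}$.

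By the construction of the reduced Steenrod operations given in the section, $\Sq^X_0$ sends the class of $x$ to the image of $\ba{x_0}$ in $\Z/p \otimes \Grt{d}{\ba{X}}$, and this value is independent of the choice of the $x_k$'s (as was verified using Proposition~\ref{cor:injl} immediately before the statement of Proposition~\ref{prop:stzero}). Since $x - x_0 \in \Fit{d-1}{X}$, passing to $\ba{X}$ gives $\ba{x} - \ba{x_0} \in \Fit{d-1}{\ba{X}}$, so $\ba{x}$ and $\ba{x_0}$ have the same image in $\Grt{d}{\ba{X}}$ and \emph{a fortiori} in $\Z/p \otimes \Grt{d}{\ba{X}}$. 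Therefore $\Sq^X_0$ fixes the class we started with, and since $d$ and the class were arbitrary, $\Sq^X_0 = \id_{\Chb(X)}$.

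There is really no substantive obstacle here: the entire content of the proposition is already packaged into the ``moreover'' clause of Theorem~\ref{ad_main}, together with the well-definedness of the operation. The only thing one must be careful about is not to confuse the two possible representatives when comparing $\ba{x}$ and $\ba{x_0}$ — but the well-definedness step exactly licenses this substitution.
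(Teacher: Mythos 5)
Your argument is correct and is exactly the route the paper takes: the paper's proof simply states that the proposition is a consequence of the last part (the ``moreover'' clause) of Theorem~\ref{ad_main}, which is precisely the congruence $x_0 \equiv x \mod \Fit{d-1}{X}$ you invoke, combined with the well-definedness already established before the proposition. Your write-up just spells out the details that the paper leaves implicit.
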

\begin{proof}
This is a consequence of the last part of Theorem~\ref{ad_main}.
\end{proof}
When the data $X,F,\ba{F}$ satisfy Condition~\ref{def:cond}, and additionally $X$ is a local complete intersection variety, with virtual tangent bundle $\Tan_X \in \Kzh(X)$, we define the \emph{total cohomological reduced Steenrod operation} by the formula $\Sq_X=\wchs(\Tan_X) \circ \Sq^X$. Its $k$-th individual component $\Sq_X^k$ lowers dimension by $k(p-1)$.

\begin{proposition}
\label{prop:stc}
Let $X$ and $Y$ be local complete intersection varieties over a field $F$, both satisfying Condition~\ref{def:cond}. Then
\begin{enumerate}[(a)]
\setcounter{enumi}{4}
\item \label{st:ring} If $X$ is smooth over $F$, then the map $\Sq_X$ is a ring homomorphism.
\item \label{st:cohlci} Let $f \colon X \to Y$ be a local complete intersection morphism. Then $\Sq_X \circ f^*=f^* \circ \Sq_Y$.
\item \label{st:rr} Let $f \colon X \to Y$ be a proper morphism, and set $\Tan_f=\Tan_X-f^*\Tan_Y\in\Kzh(X)$. Then $\Sq_Y \circ f_*=f_* \circ \wchs(-\Tan_f) \circ \Sq_X$.
\item \label{st:cohextprod} Assume that $X \times Y$ satisfies Condition~\ref{def:cond}. Then $\Sq_{X \times Y}=\Sq_X \times \Sq_Y$. 
\item \label{st:zero} We have $\Sq_X^0=\id_{\Chb(X)}$.
\end{enumerate}
\end{proposition}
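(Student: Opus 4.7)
The plan is to reduce each of \eqref{st:ring}--\eqref{st:zero} to the corresponding homological statement in Proposition~\ref{prop:st} or Proposition~\ref{prop:stzero}, absorbing the twist $\wchs(\Tan_X)$ built into $\Sq_X = \wchs(\Tan_X) \circ \Sq^X$ by four standard Chern class identities: naturality $f^* \circ \wchs(\Tan_Y) = \wchs(f^*\Tan_Y) \circ f^*$, the projection formula $\wchs(\Tan_Y) \circ f_* = f_* \circ \wchs(f^*\Tan_Y)$, additivity on products $\Tan_{X \times Y} = p_X^*\Tan_X + p_Y^*\Tan_Y$, and the tangent relation $\Tan_X = f^*\Tan_Y + \Tan_f$ for a local complete intersection morphism.

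I would handle the parts in the order \eqref{st:zero}, \eqref{st:cohlci}, \eqref{st:rr}, \eqref{st:cohextprod}, \eqref{st:ring}. Part \eqref{st:zero} is immediate: the splitting principle together with $\wchs[\Lc] = (-c_1(\Lc))^{p-1} + \id$ shows $\wchs_0(e) = \id$ for every $e \in \Kzh$, and Proposition~\ref{prop:stzero} supplies $\Sq^X_0 = \id$. For \eqref{st:cohlci}, I would start from Proposition~\ref{prop:st}~\eqref{st:lci}; the extra factor $\wchs(-\Tan_f)$ combines with $\wchs(\Tan_X)$ to give $\wchs(f^*\Tan_Y)$ by the tangent relation, and naturality then rewrites it as $f^* \circ \wchs(\Tan_Y)$, producing the desired $f^* \circ \Sq_Y$. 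Part \eqref{st:rr} is obtained symmetrically from Proposition~\ref{prop:st}~\eqref{st:proper} by pushing $\wchs(\Tan_Y)$ through $f_*$ via the projection formula, then substituting $\wchs(f^*\Tan_Y) = \wchs(\Tan_X) \circ \wchs(-\Tan_f)$, so that $\wchs(-\Tan_f)$ is exactly the twist left between $f_*$ and $\Sq_X$. The external product formula \eqref{st:cohextprod} follows from Proposition~\ref{prop:st}~\eqref{st:extprod} after observing $\wchs(\Tan_{X\times Y})[X \times Y] = \wchs(\Tan_X)[X] \boxtimes \wchs(\Tan_Y)[Y]$, again by naturality. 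Finally, for \eqref{st:ring}, smoothness of $X$ makes the diagonal $\Delta \colon X \to X \times X$ a regular closed embedding, hence a local complete intersection morphism; writing $x \cdot y = \Delta^*(x \boxtimes y)$ and chaining \eqref{st:cohextprod} for $X \times X$ with \eqref{st:cohlci} for $\Delta$ yields multiplicativity, while $\Sq_X([X]) = [X]$ follows either by pulling back along the structural morphism $X \to \pt$ using \eqref{st:cohlci}, or directly from $\Sq^X([X]) = \wchs(-\Tan_X)[X]$ (obtained by taking $Z = X$, $f = \id$, $\lambda = 1$ in the proof of Theorem~\ref{ad_main}) together with $\wchs(\Tan_X) \circ \wchs(-\Tan_X) = \id$.

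The main obstacle I anticipate is not computational but hypothetical: the diagonal argument in \eqref{st:ring} requires $X \times X$ itself to satisfy Condition~\ref{def:cond} so that $\Sq_{X \times X}$ is defined. This stability is automatic in the two motivating settings of Examples~\ref{ex:split_phv} and~\ref{ex:phv}, since a product of (split) projective homogeneous varieties is again of the same type, but it should be explicitly flagged. Once that point is acknowledged, the remainder is systematic bookkeeping with the four Chern class identities listed at the outset.
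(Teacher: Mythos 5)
Your proposal is correct and follows essentially the same route as the paper: each cohomological statement is reduced to the corresponding homological one in Proposition~\ref{prop:st} (or Proposition~\ref{prop:stzero}) by moving the twist $\wchs(\Tan_X)$ through pull-backs, push-forwards and external products, and multiplicativity in \eqref{st:ring} is obtained from \eqref{st:cohlci} and \eqref{st:cohextprod} via the diagonal, with unities handled by \eqref{st:cohlci} applied to $X \to \pt$ and Proposition~\ref{prop:stzero}. Your remark that the diagonal argument implicitly requires $X \times X$ to satisfy Condition~\ref{def:cond} is a fair point that the paper leaves unstated, but it does not change the identity of the two arguments.
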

\begin{proof}
Statement~\eqref{st:cohlci} follows from Proposition~\ref{prop:st}, \eqref{st:lci}, Statement~\eqref{st:cohextprod} follows from Proposition~\ref{prop:st}, \eqref{st:extprod}, and Statement~\eqref{st:rr} follows from Proposition~\ref{prop:st}, \eqref{st:proper}.

Concerning \eqref{st:ring}, compatibility with products follows from the combination of \eqref{st:cohlci} and ~\eqref{st:cohextprod}, and compatibility with unities follows from  Proposition~\ref{prop:stzero} applied with $X=\pt$, and \eqref{st:cohlci}. 

Finally \eqref{st:zero} follows from the fact that $\wch{0}{\Tan_X}=\id$, and Proposition~\ref{prop:stzero}. 
\end{proof}

\begin{proposition}
Let $X$ be a smooth connected variety satisfying Condition~\ref{def:cond}, and $x \in \Chb^q(X)$. Then $\Sq_X^q(x)=x^p$, and $\Sq_X^k(x)=0$ for $k > q$.
\end{proposition}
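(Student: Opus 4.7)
The plan is to use that $\Sq_X$ is a ring homomorphism (Proposition~\ref{prop:stc}\eqref{st:ring}) in order to reduce to the case where $x$ is a first Chern class of a line bundle, then establish that case via a universal computation on projective space.

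As a preliminary observation, I would show that $\Sq_Z([Z]) = [Z]$ for any smooth connected variety $Z$ satisfying Condition~\ref{def:cond}. Indeed the structural morphism $z \colon Z \to \pt$ is a local complete intersection morphism (since $Z$ is smooth), so Proposition~\ref{prop:stc}\eqref{st:cohlci} gives $\Sq_Z([Z]) = \Sq_Z(z^*(1)) = z^*(\Sq_{\pt}(1)) = z^*(1) = [Z]$; here $\Sq_{\pt} = \mathrm{id}$ on $\Chb(\pt) = \mathbb{Z}/p$ by Proposition~\ref{prop:stc}\eqref{st:zero} together with the vanishing of $\Chb^{k(p-1)}(\pt)$ for $k > 0$.

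Next, for the hyperplane inclusion $i \colon H \hookrightarrow \mathbb{P}^n$ one has $i_*[H] = t := c_1(\Oc(1))$, $\Tan_i = -N_i$, and $N_i = i^*\Oc(1)$. Proposition~\ref{prop:stc}\eqref{st:rr} combined with the preliminary gives
\[
\Sq_{\mathbb{P}^n}(t) = i_*\bigl(\wchs(N_i) \cdot [H]\bigr) = i_*\bigl(1 + (-c_1(i^*\Oc(1)))^{p-1}\bigr),
\]
and the projection formula (together with $(-1)^{p-1} \equiv 1 \pmod p$) reduces this to $t + t^p$. For an arbitrary line bundle $\Lc$ on $X$, I would write $\Lc = \Lc_1 \otimes \Lc_2^{-1}$ with the $\Lc_i$ very ample, so that $c_1(\Lc_i) = \phi_i^*(t)$ for morphisms $\phi_i \colon X \to \mathbb{P}^{N_i}$. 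Since morphisms between smooth varieties are local complete intersection, Proposition~\ref{prop:stc}\eqref{st:cohlci} yields $\Sq_X(c_1(\Lc_i)) = c_1(\Lc_i) + c_1(\Lc_i)^p$, and the Frobenius identity $a^p - b^p \equiv (a-b)^p \pmod p$ then gives $\Sq_X(c_1(\Lc)) = c_1(\Lc) + c_1(\Lc)^p$.

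By multiplicativity of $\Sq_X$, for a product $x = \prod_{j=1}^q c_1(\Lc_j)$ of $q$ first Chern classes, expanding $\Sq_X(x) = \prod_j(c_1(\Lc_j) + c_1(\Lc_j)^p)$ and collecting the codimension $q + k(p-1)$ component gives a sum over $k$-element subsets $S \subseteq \{1,\dots,q\}$; this vanishes for $k > q$ and, for $k = q$, reduces to the single term $\prod_j c_1(\Lc_j)^p = x^p$. The main obstacle is extending this to an arbitrary $x \in \Chb^q(X)$, which requires that $\Chb^*(X)$ be generated, as a $\mathbb{Z}/p$-algebra, by first Chern classes. In the motivating cases (Examples~\ref{ex:split_phv} and~\ref{ex:phv}), the ring $\Ch^*(\ba{X})$ is so generated by Demazure's structure theorem for split projective homogeneous varieties; combined with the compatibility of $\Sq_X$ under the flat (hence lci) base change $\ba{X} \to X$ via Proposition~\ref{prop:stc}\eqref{st:cohlci}, this allows verification of both identities on $\Chb(X) \subseteq \Ch(\ba{X})$ using the previous computations applied on $\ba{X}$.
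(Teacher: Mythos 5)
There is a genuine gap, and it is the one you flag yourself at the end: the reduction to an arbitrary $x$ requires $\Ch^*(\ba{X})$ to be generated as a $\Z/p$-algebra by first Chern classes of line bundles, and this fails even in the motivating examples. For a split projective quadric and $p=2$ --- the case most relevant to the applications to quadratic forms --- the Picard group is generated by $\Oc(1)$, and the subring of $\Ch^*(\ba{X})$ generated by $h=c_1(\Oc(1))$ does not contain the classes of linear subspaces of large dimension (on a split quadric one has $h^n=2l_n$ in the middle codimension, so $l_n$ is not a polynomial in $h$ modulo $2$). Demazure's results give generation by first Chern classes only up to the torsion index (e.g.\ for full flag varieties in type $A$), not for arbitrary split projective homogeneous varieties. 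Moreover the proposition is stated for an arbitrary smooth connected variety satisfying Condition~\ref{def:cond}, with no homogeneity hypothesis, so even a correct generation statement for homogeneous varieties would not prove it.

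The computations you do carry out ($\Sq_Z[Z]=[Z]$, $\Sq_{\mathbb{P}^n}(t)=t+t^p$, the passage from very ample to arbitrary line bundles via $a^p-b^p\equiv(a-b)^p$) are correct and consistent with the statement, but they only establish the proposition on the subring generated by first Chern classes. The paper's proof circumvents the obstruction by running the ``generated by line bundles'' argument in $\Kzh$ rather than in $\Ch$: it lifts $x$ to $y\in\Fit{\dim X-q}{X}$, uses regularity of $X$ to write $y=y'\cdot[\Oc_X]$ with $y'\in\Kzh(X)$, and invokes Lemma~\ref{lemm:psipower}, which gives $\ad(y')=(y')^p \bmod p\Kzh(X)$ for an \emph{arbitrary} class $y'$ --- there the splitting principle genuinely applies, because every $K$-theory class is an integral combination of classes of vector bundles. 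Combining this with Proposition~\ref{prop:Bott}, Theorem~\ref{ad_main} and Proposition~\ref{cor:injl} then identifies the component of $\Sq_X(x)$ in codimension $qp$ with $x^p$ and kills the components beyond it. If you want to salvage your strategy, the step ``$\Chb^*(X)$ is generated by first Chern classes'' must be replaced by this $K$-theoretic input.
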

\begin{proof}
Choose $y \in \Fit{\dim X -q}{X}$ representing $x$. Since $X$ is a regular variety, there an element $y' \in \Kzh(X)$ satisfying $y'\cdot[\Oc_X]=y$, and we have 
\[
1 \otimes \big(\ad(y') \cdot [\Oc_X]\big)=\Th(\Tan_X) \cdot \adp(y).
\]
  
Using Proposition~\ref{prop:Bott} and Theorem~\ref{ad_main}, we find elements $t_i \in \Fig{i(p-1)}{X}$ and $y_j \in \Fit{\dim X -q-j(p-1)}{X}$ such that
\begin{align*} 
1 \otimes \big( \ad(y') \cdot [\Oc_X]\big)&=\Big( \sum_{i=0}^{\left[ \frac{\dim X}{p-1} \right]} p^{\dim X-i} \otimes t_i \Big)\cdot \Big( \sum_{j=0}^{\left[ \frac{\dim X-q}{p-1} \right]} p^{q-\dim X -j} \otimes y_j \Big)\\
&=\sum_{k=0}^{\left[ \frac{\dim X}{p-1} \right]} p^{q-k} \otimes z_k \quad \text{with} \quad z_k= \sum_{i+j=k} t_i \cdot y_j. 
\end{align*}
Note that $z_k \in \Fit{\dim X-q-k(p-1)}{X}$ by \eqref{eq:gammatop}. It follows from Lemma~\ref{lemm:psipower} below and Proposition~\ref{cor:injl} that the element $z_q-y^p$ (resp. $z_k$ for $k >q$) has image zero in $\Z/p \otimes \Grt{\dim X -qp}{\ba{X}}$ (resp. $\Z/p \otimes \Grt{\dim X -q-k(p-1)}{\ba{X}}$.

Now the element $y_j$ (resp. $t_i\cdot[\Oc_X]$) has image $\Sq^X_j(x)$ (resp. $\wch{i}{\Tan_X}$) in $\Chb(X)$, hence $z_k$ has image $\Sq_X^k(x)$. The statement follows. 
\end{proof}

\begin{lemma}
\label{lemm:psipower}
Let $X$ be a variety, and $x \in \Kzh(X)$. Then $\ad(x)=x^p \mod p\Kzh(X)$
\end{lemma}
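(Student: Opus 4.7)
The plan is to show that the subset
\[
S = \{x \in \Kzh(X) \mid \ad(x) \equiv x^{\pp} \pmod{\pp\Kzh(X)}\}
\]
is a subring of $\Kzh(X)$ containing every line bundle class, and then to invoke the splitting principle to cover all of $\Kzh(X)$. For a line bundle $\Lc$, the defining formula $\ad([\Lc]) = [\Lc]^{\pp}$ shows $[\Lc] \in S$ trivially. Stability of $S$ under sum and product follows from $\ad$ being a ring endomorphism of $\Kzh(X)$ together with the elementary Frobenius identity $(a+b)^{\pp} \equiv a^{\pp} + b^{\pp} \pmod{\pp}$, valid in any commutative $\Z$-algebra; closure under negation is automatic (for $\pp = 2$ one uses $-a \equiv a \pmod 2$).

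Next, given a vector bundle $\Ec$ on $X$, I would consider the complete flag variety $\pi \colon Y \to X$ associated with $\Ec$. Iterating the projective bundle theorem for $\Kzh$, one obtains that $\Kzh(Y)$ is a free $\Kzh(X)$-module, so $\pi^*$ is split injective; in particular it remains injective after reduction modulo $\pp$. The pull-back $\pi^*[\Ec]$ decomposes as a sum of line bundle classes, and therefore lies in the analogous subset $S_Y \subset \Kzh(Y)$ by the first paragraph. Since both $\ad$ and the $\pp$-th power operation commute with $\pi^*$, we obtain $\pi^*\bigl(\ad([\Ec]) - [\Ec]^{\pp}\bigr) \in \pp \Kzh(Y)$, and injectivity of $\pi^*$ modulo $\pp$ yields $[\Ec] \in S$. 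Since $\Kzh(X)$ is generated as an abelian group by classes of vector bundles, this forces $S = \Kzh(X)$.

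The only real subtlety—and the reason the argument needs to be phrased with some care—is that mere injectivity of the pull-back along the flag bundle would not be enough; what is required is injectivity \emph{after reduction modulo $\pp$}. This is why the projective bundle formula, which gives split injectivity and hence injectivity of every base-change of $\pi^*$, must be invoked rather than a weaker injectivity statement. Once this point is in place, the rest of the argument is the standard Frobenius-plus-splitting-principle pattern.
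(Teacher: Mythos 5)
Your argument is correct and is exactly the paper's proof written out in full: the paper likewise observes that $\ad$ and $x \mapsto x^{\pp}$ are additive modulo $\pp$, compatible with pull-backs, and agree on line bundle classes, and then concludes by the splitting principle. Your explicit remark that one needs the \emph{split} injectivity of the flag-bundle pull-back (so that injectivity survives reduction modulo $\pp$) is a useful elaboration of what the paper leaves implicit, but it is not a different route.
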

\begin{proof}
The maps $\ad$ and $x \mapsto x^p$ are linear modulo $p$, compatible with pull-backs, and coincide on the classes of line bundles by the very definition of the Adams operation. We conclude using the splitting principle.
\end{proof}

\begin{remark}
We need to resolve singularities on the varieties corresponding to the prime cycles to which we want to apply the Steenrod operations. When the base field is differentially finite over a perfect subfield, but does not necessarily admit $p$-resolution of singularities, one can still resolve singularities of low dimensional varieties by the results of  \cite{Co-Pi-I, Co-Pi-II}. It follows that, when the integer $p$ is small enough ($p=2$ or $3$) and $X$ is a projective homogeneous variety of arbitrary dimension over a field as above, the method presented here can be used to construct operations on low dimensional cycles modulo $\pp$ (namely the operations $\Chb_i(X) \to \Chb_{i-k(p-1)}(X)$ for $i\leq 3$ and $k(p-1)\leq i$).
\end{remark}

\section{A degree formula}
\begin{theorem}
\label{th:degree}
Let $X$ be a projective variety, generated by $p$-regular classes. Let $x \in \Fit{d}{X}$. Then $X$ possesses a zero-cycle of degree
\[
\lambda \cdot p^{\left[ \frac{d}{p-1}\right]} \cdot \deg(x),
\]
for some integer $\lambda$ prime to $p$. (See \eqref{eq:deg} for a definition of the map $\deg$.)
\end{theorem}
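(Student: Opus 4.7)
The plan is to proceed by induction on $d$. For the base case $d = 0$, the exponent $N := [d/(p-1)]$ vanishes, and the statement reduces to finding a zero-cycle of degree $\deg(x)$; this follows at once because $\varphi_X \colon \CH_0(X) \to \Fit{0}{X}$ is surjective (Proposition~\ref{prop:phi}~\eqref{phiisom}) and respects the degree by \eqref{eq:degcomp}, so any lift of $x$ will do and $\lambda = 1$ suffices.

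For the inductive step with $d \geq 1$, the first move is to apply Theorem~\ref{ad_main} to obtain a decomposition
\[
\adp(x) = \sum_{k=0}^N p^{-d-k} \otimes x_k,
\]
with $x_k \in \Fit{d-k(p-1)}{X}$ and $x_0 \equiv x \mod \Fit{d-1}{X}$. Next, push forward to the point: since $\adp$ commutes with proper push-forwards and acts as the identity on $\Kzl(\pt) = \Z$, one has $\deg(\adp(x)) = \deg(x)$ in $\Zu$, and clearing denominators produces the integer identity
\[
p^{d+N}\deg(x) = \sum_{k=0}^{N} p^{N-k}\deg(x_k).
\]

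The main obstacle is that the $k = 0$ term involves $x_0$, which lies in the same filtration piece $\Fit{d}{X}$ as $x$ itself, so the induction hypothesis does not apply directly. The resolution is to exploit the congruence $x_0 \equiv x \mod \Fit{d-1}{X}$: setting $y := x_0 - x \in \Fit{d-1}{X}$ and substituting $\deg(x_0) = \deg(x) + \deg(y)$ transfers the $x$-contribution of $x_0$ to the left-hand side, producing the harmless prime-to-$p$ factor $p^d - 1$. The identity becomes
\[
p^N(p^d - 1)\deg(x) = \sum_{k=1}^N p^{N-k}\deg(x_k) + p^N \deg(y),
\]
in which every term on the right involves an element of strictly smaller filtration $\Fit{d'}{X}$ with $d' < d$.

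To conclude, I would apply the induction hypothesis to each $x_k$ (for $1 \leq k \leq N$) and to $y$: for each there exists a prime-to-$p$ integer $\mu$ such that $\mu \, p^{N-k}\deg(x_k)$, respectively $\mu \, p^N \deg(y)$ (for the latter, use $[(d-1)/(p-1)] \leq N$ and multiply by an extra power of $p$ if needed), is the degree of a zero-cycle on $X$. Multiplying the displayed identity through by the product $M$ of all these prime-to-$p$ multipliers, every term on the right becomes the degree of a zero-cycle, hence so does $M(p^d - 1) \cdot p^N \deg(x)$. Setting $\lambda := M(p^d - 1)$, which is prime to $p$, completes the induction.
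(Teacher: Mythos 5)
Your proposal is correct and follows essentially the same route as the paper: the decomposition of $\adp(x)$ from Theorem~\ref{ad_main}, the identity $\deg\circ\adp=\deg$, the subtraction of $\deg(x)$ to produce the prime-to-$p$ factor $p^d-1$ via $x_0\equiv x \bmod \Fit{d-1}{X}$, and induction on the filtration index. The only cosmetic difference is that the paper carries out the bookkeeping inside $\Z_{(p)}\otimes\CH_0(X)$ (inverting $p^e-1$ there) rather than multiplying through by a product of prime-to-$p$ integers.
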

\begin{proof}
By induction on $d$, the case $d=0$ following from \eqref{eq:degcomp}.
  
Assume that the statement holds when $d<e$, and take $x \in \Fit{e}{X}$. By Theorem~\ref{ad_main}, we find elements $x_k \in \Fit{e-k(p-1)}{X}$ satisfying
\[
\adp(x)=\sum_{k=0}^{\left[ \frac{e}{p-1}\right]} p^{-e-k} \otimes x_k.
\]
Then applying the degree map $\deg \colon \Zu \otimes \Kzl(X) \to \Zu$,
\[
(p^e-1) \deg(x)=p^e \deg \circ \adp(x) -\deg(x)=\deg(x_0-x) + \sum_{k=1}^{\left[ \frac{e}{p-1}\right]} p^{-k} \deg(x_k).
\]
Since $x_0-x \in \Fit{e-1}{X}$, we know by induction hypothesis that there is a cycle $c_0 \in \Z_{(p)} \otimes \CH_0(X)$ whose degree is $p^{\left[ \frac{e}{p-1}\right]}\deg(x_0-x)$. Similarly there is, for every $k\geq 1$, a cycle $c_k \in \Z_{(p)} \otimes \CH_0(X)$  whose degree is  $p^{\left[ \frac{e}{p-1}\right]-k}\deg(x_k)$. Then $(p^e-1)^{-1} \cdot ( c_0 +\cdots + c_{\left[ \frac{e}{p-1}\right]})$ is an element of $\Z_{(p)} \otimes \CH_0(X)$ whose degree is $p^{\left[ \frac{e}{p-1}\right]}\deg(x)$. This concludes the proof.
\end{proof}

When $X$ is a projective variety over a field $F$, we denote by 
\[
\chi(\Oc_X)=\deg[\Oc_X]=\sum_{i\geq 0} (-1)^i \dim_F H^i(X,\Oc_X)
\]
the Euler characteristic of the structure sheaf of $X$. 

The following statement is the $p$-primary version of \cite[Theorem~4.4]{Zai-09} (but we also prove it for singular varieties). 

\begin{proposition}
Let $f \colon X \to Y$ be a projective morphism of integral varieties of dimension $d$. Define an integer $\deg f$ as the degree of the map $f$ if it is generically finite, or zero otherwise. Assume that $Y$ is projective and generated by $p$-regular classes. Then $Y$ possesses a zero-cycle of degree
\[
\lambda \cdot p^{\left[\frac{d-1}{p-1}\right]}\big(\chi(\Oc_X) - (\deg f)\cdot\chi(\Oc_Y)\big),
\]
for some integer $\lambda$ prime to $p$.
\end{proposition}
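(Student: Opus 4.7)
The plan is to reduce to Theorem~\ref{th:degree} by finding an element of $\Fit{d-1}{Y}$ whose degree is $\chi(\Oc_X) - (\deg f)\chi(\Oc_Y)$. The natural candidate is
\[
x := f_*[\Oc_X] - (\deg f)\cdot[\Oc_Y] \in \Kzl(Y).
\]
Since the degree map factors through proper push-forwards and the structural morphism of $X$ factors as $X \xrightarrow{f} Y \to \pt$, we immediately get
\[
\deg(x) = \deg[\Oc_X] - (\deg f)\deg[\Oc_Y] = \chi(\Oc_X) - (\deg f)\chi(\Oc_Y).
\]
Granted that $x \in \Fit{d-1}{Y}$, Theorem~\ref{th:degree} applied to $Y$ and $x$ yields a zero-cycle on $Y$ of degree $\lambda \cdot p^{[(d-1)/(p-1)]}\deg(x)$ for some integer $\lambda$ prime to $p$, which is exactly the asserted formula.

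The main point to check is thus that $x \in \Fit{d-1}{Y}$. I would split this into the two cases appearing in the definition of $\deg f$. If $f$ is not generically finite, then $\dim f(X) < d$, the sheaves $R^i f_*\Oc_X$ are supported on $f(X)$, and $\deg f = 0$, so $x = f_*[\Oc_X]$ lies in $\Fit{d-1}{Y}$ by definition of the topological filtration. If $f$ is generically finite, pick a non-empty open $U \subset Y$ over which $f$ is flat and finite; the complement $Y \setminus U$ has dimension $\leq d-1$. Over such $U$, $f_*\Oc_X$ is a locally free $\Oc_U$-module of rank $\deg f$ and the higher direct images vanish, so $f_*[\Oc_X]|_U - (\deg f)[\Oc_U]$ is of the form $[\Ec] - r[\Oc_U]$ for a rank~$r$ locally free sheaf on the regular variety $U$. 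Such a class lies in $\Fig{1}{U}\cdot[\Oc_U] \subset \Fit{d-1}{U}$ by \eqref{eq:gammatop}, using the identification $\Kzh(U) = \Kzl(U)$ on regular varieties. Compatibility of the topological filtration with the localization sequence for $U \subset Y$ then lets us lift this to show $x \in \Fit{d-1}{Y}$.

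The step I expect to require the most care is the generically finite case: one has to argue cleanly that $f_*[\Oc_X] - (\deg f)[\Oc_Y]$ lies in $\Fit{d-1}{Y}$ even when $Y$ is singular, which is why I route the argument through an open regular locus $U$ and the localization sequence rather than trying to argue directly on $Y$. Once this is in place, everything else is a direct application of Theorem~\ref{th:degree}, with no need to invoke the reduced Steenrod operations of the previous section.
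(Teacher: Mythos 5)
Your proposal is correct and follows exactly the paper's route: set $x = f_*[\Oc_X] - (\deg f)\cdot[\Oc_Y]$, observe it lies in $\Fit{d-1}{Y}$ and has degree $\chi(\Oc_X)-(\deg f)\chi(\Oc_Y)$, then apply Theorem~\ref{th:degree}. The only difference is that the paper asserts $x\in\Fit{d-1}{Y}$ without comment, whereas you supply the justification (case split on generic finiteness, passage to a dense open where $f$ is finite flat, and the localization sequence), which is a valid filling-in of that step.
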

\begin{proof}
There is an element $\delta \in \Fit{d-1}{X}$ such that we have in $\Kzl(Y)$ the equation $f_*[\Oc_X]=(\deg f) \cdot [\Oc_Y] + \delta$. Then
\[
\chi(\Oc_X)=\deg [\Oc_X]=\deg \circ f_* [\Oc_X]=(\deg f) \cdot \chi(\Oc_Y) + \deg(\delta).
\]
Using Theorem~\ref{th:degree}, we see that the integer $p^{\left[\frac{d-1}{p-1}\right]} \cdot \deg(\delta)$ is the degree of an element of $\Z_{(p)} \otimes \CH_0(Y)$, as requested. 
\end{proof}

\bibliographystyle{alpha}

\end{document}